\documentclass[amstex,10pt,reqno]{amsart}
\usepackage{amsmath,amsfonts,amssymb,amsthm,enumerate,hyperref,multicol,cite}
\usepackage{graphicx}
\usepackage[english]{babel}
\textwidth 16cm
\textheight 22cm
\oddsidemargin 1cm
\evensidemargin 1cm
\newtheorem{theorem}{Theorem}
\newtheorem{lemma}{Lemma}

\newtheorem{remark}{Remark}
\thispagestyle{empty}
\numberwithin{equation}{section}
\newcommand\norm[1]{\left\lVert#1\right\rVert}
\usepackage[centering, bottom=1.0in,top=1.0in]{geometry}

\begin{document}
\dedicatory{Dedicated to Prof. R. P. Agarwal on his $74^{th}$ birthday}

\leftline{ \scriptsize \it  }
\title[]
{On some summability methods for a $q$-analogue of an integral type operator based on multivariate $q$-Lagrange polynomials}
\maketitle
\begin{center}
{\bf Purshottam Narain Agrawal$^1$, Rahul Shukla$^2$,  and Behar Baxhaku $^3$}
\vskip0.2in
$^{1,2}$Department of Mathematics\\
Indian Institute of Technology Roorkee\\
Roorkee-247667, India\\
$^1$  email: pnappfma@gmail.com\\
$^2$ email: rshukla@ma.iitr.ac.in\\
$^3$ Department of Mathematics\\
University of Prishtina\\
Prishtina, Kosovo\\
$^3$email: behar.baxhaku@uni-pr.edu
\end{center}
\begin{abstract}
The present paper considers a $q$-analogue of an operator defined by Erku\c{s}-Duman et al. (Calcolo, 45(1) (2008), 53-67) involving $q$-Lagrange polynomials in several variables. The Korovkin type theorems in the settings of deferred weighted A-statistical convergence and the power series method are investigated.

\noindent Keywords: Multivariate Lagrange polynomials, $q$-multivariate Lagrange polynomials, natural density, $A$-statistical convergence, deferred weighted $A$-statistical convergence, convergence by power series method.\\
MSC(2020): 41A25, 41A36, 33C45, 26A15.
\end{abstract}

\section{Introduction}
The past two decades have witnessed to the constructions of linear positive operators by means of multivariate-Lagrange polynomials (and their q-analgoue) and to their approximation behaviour. Let $\big(\mathfrak{C}(I), \norm{.}\big)$ be the Banach space of all continuous functions on $I=[0,1]$ with the sup-norm $\norm{.}$. After the introduction of celebrated multivariate Lagrange polynomials (widely known as Chan-Chyan-Srivastava polynomials) \cite{Chan}, for $f\in \mathfrak{C}(I)$, Erkus et al. \cite{Erkus} defined the following sequence of linear positive operators as;
\begin{eqnarray}\label{eq2}
\mathfrak{L}_n^{\beta^{(1)},\dotsb,\beta^{(r)}}(f(s);x)=\left\{\prod\limits_{k=1}^{r}\left(1-x\beta_n^{(k)}\right)^n\right\}\sum\limits_{p=0}^{\infty}\left\{\sum\limits_{l_1+l_2+\dotsb+l_r=p}f\left(\frac{l_r}{n+l_r-1}\right)\prod\limits_{s=1}^{r}\left(\beta_n^{(s)}\right)^{l_s}\frac{(n)_{l_s}}{l_s!}\right\}x^p,
\end{eqnarray}
where $x\in I$, $\beta^{(j)}=\big<\beta_n^{(j)}\big>$ are sequences in $(0,1)$ for each $j=1,2,\dotsb~r$, and $(\rho)_s$ denotes the standard Pochhammer symbol. The authors studied the approximation properties of the above operator in the $A$-statistical settings. For every $q \in \mathbb{R}$ such that $|q|<1$ and $n \in \mathbb{N}^{0} =\{0,1,2,...\}$, the $q$-Pochhammer symbol $(\rho;q)_{n}$ is given by
$$(\rho;q)_n=
\begin{array}{cc}
  \bigg\{\begin{array}{cc}
  1 ,& \text{if} \quad n=0 ,\\
  (1-\rho)(1-\rho q)...(1-\rho q^{n-1}),& \text{if} \quad n\in \mathbb{N}, \\
    \end{array}
\end{array}
$$
and the q-analogue of a natural number ($q$-integers) is defined by
$$[n]_q=\frac{1-q^n}{1-q}=1+q+q^2+....+q^{n-1}.$$ 
Altin et al.\cite{Altin} proposed a $q$-multivariable Lagrange polynomials $h_{n,q}^{(\eta_1,\dotsb~\eta_r)}(z_1,z_2,\dotsb~z_r)$ as follows,
\begin{eqnarray}\label{eq3}
h_{n,q}^{(\eta_1,\dotsb~\eta_r)}(z_1,z_2,\dotsb~z_r)=\sum\limits_{l_1+l_1+\dotsb+l_r=n}\left\{\prod\limits_{k=1}^{r}\left(q^{\eta_{k}},q\right)_{l_k}\frac{(z_k)^{l_k}}{\left(q,q\right)_{l_k}}\right\},
\end{eqnarray}
whereas the above multivariate polynomials has the generating function of the following form
\begin{eqnarray}\label{eq3*}
 \prod\limits_{k=1}^{r}\frac{1}{(tz_k;q)_{\eta_{k}}}=\sum\limits_{n=0}^{\infty}h_{n,q}^{(\eta_1,\dotsb~\eta_r)}(z_1,z_2,\dotsb~z_r)t^{n},
\end{eqnarray}
where $|t|<\min\{|z_1|^{-1},\dotsb~|z_r|^{-1}\}$.

Erku\c{s}-Duman et al. \cite{ED2} proposed an integral type generalizations of the operator (\ref{eq2}) in the following manner:
\begin{eqnarray}\label{new}
\mathfrak{E}_n^{\beta^{(1)},\dotsb,\beta^{(r)}}(f(s);x)&=&\left\{\prod\limits_{k=1}^{r}\left(1-x\beta_n^{(k)}\right)^n\right\}\sum\limits_{p=0}^{\infty}\Bigg{\lbrace}\sum\limits_{l_1+l_2+\dotsb+l_r=p}(n+l_r-1)\bigg(\prod\limits_{s=1}^{r}\left(\beta_n^{(s)}\right)^{l_s}\frac{(n)_{l_s}}{l_s!}\bigg)\nonumber\\
&&\int_{\frac{l_{r}}{n+l_{r}-1}}^{\frac{l_{r}+1}{n+l_{r}-1}} f(s)ds\Bigg{\rbrace}x^p,
\end{eqnarray}
and studied its statistical approximation properties by means of modulus of continuity and Peetre's K-functional. 
Using the generating function given by (\ref{eq3*}), Erku\c{s}-Duman \cite{ED} studied the following $q$-analogue of the operator $\mathfrak{L}_{n}^{\beta^{(1)},\dotsb~\beta^{(r)}}$     
\begin{eqnarray}\label{eq4*}
\mathfrak{S}_{n,q}^{\beta^{(1)},\dotsb,\beta^{(r)}}(f(s);x) = \left\{\prod\limits_{k=1}^{r}(x\beta_n^{(k)};q)_n\right\}
\sum\limits_{p=0}^{\infty}\bigg\{\sum\limits_{l_1+l_2+\dotsb+l_r=p}(q^{n};q)_{l_1}(q^{n};q)_{l_2}...(q^{n};q)_{l_r}\nonumber\\\hspace{2cm}\frac{(\beta_n^{(1)})^{l_1}(\beta_n^{(2)})^{l_2}...(\beta_n^{(r)})^{l_r}}{(q;q)_{l_1}(q;q)_{l_2}...(q;q)_{l_r}}f\left(\frac{[l_{r}]_{q}}{[n+l_{r}-1]_{q}}\right)\bigg\}x^p.
\end{eqnarray}

It has been observe that the above operator (\ref{eq4*}) was also indepedently tackled by Mursaleen et al. \cite{Mursalen} but unfortunately the proposed definition was incorrect while Behar et al. \cite{Bax} extended the study of Erku\c{s}-Duman to the bi-variate and GBS (Generalized Boolean Sum) cases.  
The prime objective of this paper is to define a $q$-analogue of the operators (\ref{new}) by means of Riemann type $q$-integral, and to study the convergence of such operators via summability methods. In Section 2., we construct the operator and establish some important lemmas to prove the main results. In Section 3., the Korovkin type theorems in the deferred weighted $A$-statistical approximation are studied for these operators. In the last Section, we establish the basic convergence theorem and an estimate of error in the approximation by using power series summability method.
\section{Construction of the operators and Important Lemmas}
Marinkovi\'{c} et al. \cite{Marin} introduced the following Riemann type $q$-integral
\begin{eqnarray}\label{Riman1}
\int_\alpha^\beta f(s) {\rm d_q^R}s = (1-q)(\alpha-\beta)\sum_{j=0}^{\infty} f\left(\alpha+(\beta-\alpha)q^j\right)q^j,
\end{eqnarray}
where $\alpha, \beta, q \in \mathbb{R}$ such that $0<\alpha<\beta$ and $q \in (0,1)$.
This definition of $q$-integral is appropriate to derive some $q$-analogues of well-known integral inequalities \cite{MarinkovicRajkoviStankovi2008}.
Using the $q$-Riemann type integral, for $f\in \mathfrak{C}(I)$, we propose a $q$-analogue of the operators (\ref{new}) as follows:
\begin{eqnarray}\label{eq5}
\mathfrak{K}_{n,q}^{\beta^{(1)},\dotsb,\beta^{(r)}}(f(s);x) = \left\{\prod\limits_{k=1}^{r}(x\beta_n^{(k)};q)_n\right\}
\sum\limits_{p=0}^{\infty}\bigg\{\sum\limits_{l_1+l_2+\dotsb+l_r=p}(q^{n};q)_{l_1}(q^{n};q)_{l_2}...(q^{n};q)_{l_r}\nonumber\\\hspace{2cm}\times[n+l_r-1]_q q^{-l_r}\frac{(\beta_n^{(1)})^{l_1}(\beta_n^{(2)})^{l_2}...(\beta_n^{(r)})^{l_r}}{(q;q)_{l_1}(q;q)_{l_2}...(q;q)_{l_r}}\int_{\frac{[l_{r}]_{q}}{[n+l_{r}-1]_{q}}}^{\frac{[l_{r}+1]_{q}}{[n+l_{r}-1]_{q}}} f(s) {\rm d_q^R}s  \bigg\}x^p.
\end{eqnarray}

\newpage
In order to discuss our main results, we first give the following lemmas.
\begin{lemma}\label{lem1} The operators $\mathfrak{K}_{n,q}^{\beta^{(1)},\dotsb,\beta^{(r)}}(.;x)$ verify the assertions:
\begin{enumerate}[(i)]
\item $\mathfrak{K}_{n,q}^{\beta^{(1)},\dotsb,\beta^{(r)}}(1;x)=1$;\label{part1}
\item $\mathfrak{K}_{n,q}^{\beta^{(1)},\dotsb,\beta^{(r)}}(s;x) ~ \leq ~  x\beta_{n}^{(r)}+\frac{1}{[2]_q[n]_{q}}$.  Moreover, $$\big|\mathfrak{K}_{n,q}^{\beta^{(1)},\dotsb,\beta^{(r)}}(s;x)-x\big| ~\leq ~ x(1-\beta_{n}^{(r)})+ \frac{1}{[2]_q[n]_{q}}.$$
\item $\mathfrak{S}_{n,q}^{\beta^{(1)},\dotsb,\beta^{(r)}}(s^{2};x) ~ \leq ~ \frac{1}{[3]_q[n]_q^2}+\frac{x\beta_{n}^{(r)}}{[n]_q}\left(1+\frac{2}{[2]_q}\right) + q (x\beta_{n}^{(r)})^{2}$. Also, $$\big|\mathfrak{K}_{n,q}^{\beta^{(1)},\dotsb,\beta^{(r)}}(s^{2};x)-x^{2}\big| ~\leq ~ \frac{1}{[3]_q[n]_q^2}+\frac{x\beta_{n}^{(r)}}{[n]_q}\left(1+\frac{2}{[2]_q}\right) + 2x^{2}(1-\beta_{n}^{(r)}).$$
\end{enumerate}
\end{lemma}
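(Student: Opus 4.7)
The plan is to reduce each moment $\mathfrak{K}_{n,q}^{\beta^{(1)},\dotsb,\beta^{(r)}}(s^k;x)$ for $k=0,1,2$ to the analogous moments of the already-studied operator $\mathfrak{S}_{n,q}^{\beta^{(1)},\dotsb,\beta^{(r)}}$ by first evaluating the inner $q$-Riemann integral over $\bigl[[l_r]_q/[n+l_r-1]_q,\,[l_r+1]_q/[n+l_r-1]_q\bigr]$. Since $[l_r+1]_q-[l_r]_q=q^{l_r}$, the width is $\beta-\alpha=q^{l_r}/[n+l_r-1]_q$; applying the geometric-series identity $\sum_{j\geq 0}q^{mj}=1/((1-q)[m]_q)$ with $m=1,2,3$ to definition (\ref{Riman1}), $\int_\alpha^\beta s^k\,{\rm d_q^R} s$ becomes a linear combination of the quantities $\alpha^i(\beta-\alpha)^{k+1-i}/[k+1-i]_q$. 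Multiplying by the prefactor $[n+l_r-1]_q q^{-l_r}$ present in (\ref{eq5}) gives exactly $1$ for $k=0$, $\alpha + q^{l_r}/([2]_q[n+l_r-1]_q)$ for $k=1$, and $\alpha^2 + 2\alpha q^{l_r}/([2]_q[n+l_r-1]_q) + q^{2l_r}/([3]_q[n+l_r-1]_q^2)$ for $k=2$. In particular, $\mathfrak{K}_{n,q}(1;x)=\mathfrak{S}_{n,q}(1;x)$, and the latter equals $1$ by specialising (\ref{eq3*}) with $t=x$, $z_k=\beta_n^{(k)}$, $\eta_k=n$; this proves (i).

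The central algebraic tool for (ii) and (iii) is the shift identity $\frac{[l]_q(q^n;q)_l}{[n+l-1]_q(q;q)_l}=\frac{(q^n;q)_{l-1}}{(q;q)_{l-1}}$ (valid for $l\geq 1$), which is a direct factorial cancellation in the $q$-Pochhammer symbols. Applied once to the $\alpha$-part of the $k=1$ expression, together with the re-indexing $l_r\mapsto l_r-1$ and the generating function (\ref{eq3*}), it yields $\mathfrak{S}_{n,q}(s;x)=x\beta_n^{(r)}$. The remainder $q^{l_r}/([2]_q[n+l_r-1]_q)$ is controlled by the bound $q^{l_r}/[n+l_r-1]_q\leq 1/[n]_q$, a consequence of the identity $[n+l_r-1]_q=[n]_q+q^n[l_r-1]_q\geq[n]_q$ for $l_r\geq 1$; the remaining sum is exactly the kernel of $\mathfrak{S}_{n,q}(1;x)=1$, so $\mathfrak{K}_{n,q}(s;x)\leq x\beta_n^{(r)}+1/([2]_q[n]_q)$. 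Since the remainder is manifestly non-negative, $\mathfrak{K}_{n,q}(s;x)\geq x\beta_n^{(r)}$, and the triangle inequality against $x\beta_n^{(r)}$ then delivers the absolute bound in (ii).

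Part (iii) follows the same template but requires iterating the shift identity. One application to $\frac{[l_r]_q^2(q^n;q)_{l_r}}{[n+l_r-1]_q^2(q;q)_{l_r}}$ reduces it to $\frac{[l_r]_q(q^n;q)_{l_r-1}}{[n+l_r-1]_q(q;q)_{l_r-1}}$. Decomposing $[l_r]_q=1+q[l_r-1]_q$ splits this into a low-order piece bounded by $\frac{1}{[n]_q}\cdot\frac{(q^n;q)_{l_r-1}}{(q;q)_{l_r-1}}$, and a high-order piece to which a second shift applies (with $l$ replaced by $l_r-1$, giving $\frac{[l_r-1]_q(q^n;q)_{l_r-1}}{[n+l_r-2]_q(q;q)_{l_r-1}}=\frac{(q^n;q)_{l_r-2}}{(q;q)_{l_r-2}}$); bounding $[n+l_r-2]_q/[n+l_r-1]_q\leq 1$ leaves $q\cdot\frac{(q^n;q)_{l_r-2}}{(q;q)_{l_r-2}}$. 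Summation via (\ref{eq3*}) with two re-indexings produces $x\beta_n^{(r)}/[n]_q$ and $q(x\beta_n^{(r)})^2$, respectively. The further remainders from the $q$-integral contribute $(2/[2]_q)(x\beta_n^{(r)}/[n]_q)$ by the same shift-and-bound argument and $1/([3]_q[n]_q^2)$ via $q^{2l_r}/[n+l_r-1]_q^2\leq 1/[n]_q^2$, and these combine into the stated upper bound. For the absolute estimate, split according to sign: on the branch $x^2\geq\mathfrak{K}_{n,q}(s^2;x)$, the Cauchy--Schwarz inequality for positive linear operators with $\mathfrak{K}_{n,q}(1;x)=1$ gives $\mathfrak{K}_{n,q}(s^2;x)\geq(\mathfrak{K}_{n,q}(s;x))^2\geq(x\beta_n^{(r)})^2$, hence $x^2-\mathfrak{K}_{n,q}(s^2;x)\leq x^2(1-(\beta_n^{(r)})^2)\leq 2x^2(1-\beta_n^{(r)})$; on the opposite branch the upper bound above, together with $q(x\beta_n^{(r)})^2\leq x^2$, controls $\mathfrak{K}_{n,q}(s^2;x)-x^2$ by the three small terms alone.

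The main obstacle is the iterated shift for $\mathfrak{S}_{n,q}(s^2;x)$: after the first application, the residual factor $[l_r]_q/[n+l_r-1]_q$ does not fit the hypothesis of a direct second application, and only the decomposition $[l_r]_q=1+q[l_r-1]_q$ exposes a sub-expression usable in the identity, at the cost of replacing $[n+l_r-1]_q$ with $[n+l_r-2]_q$ in the denominator (justified by $[n+l_r-2]_q\leq[n+l_r-1]_q$, which accounts for the $\leq$ rather than $=$ in (iii)). The boundary indices $l_r=0,1$, where the prospective shifted kernels $(q^n;q)_{l_r-j}/(q;q)_{l_r-j}$ with $j=1,2$ are preceded by a vanishing $[l_r-j+1]_q$-factor, need only be noted as dropping out of the sum rather than separately estimated.
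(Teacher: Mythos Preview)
Your proof is correct and follows essentially the same route as the paper: you evaluate the $q$-Riemann integral of $s^k$ explicitly, split the resulting expression, apply the shift identity $\frac{[l]_q(q^n;q)_l}{[n+l-1]_q(q;q)_l}=\frac{(q^n;q)_{l-1}}{(q;q)_{l-1}}$ and the bound $q^{l_r}/[n+l_r-1]_q\le 1/[n]_q$, and then collapse everything via the generating function (\ref{eq3*}) --- exactly as in the paper's $\sum_1$--$\sum_4$ decomposition. The only place you deviate is the lower estimate in (iii): the paper uses positivity of the second central moment, $0\le \mathfrak{K}_{n,q}((s-x)^2;x)$, together with $\mathfrak{K}_{n,q}(s;x)\ge x\beta_n^{(r)}$ to obtain $\mathfrak{K}_{n,q}(s^2;x)-x^2\ge -2x^2(1-\beta_n^{(r)})$, whereas you invoke Cauchy--Schwarz to get $\mathfrak{K}_{n,q}(s^2;x)\ge (x\beta_n^{(r)})^2$ and then $x^2-(x\beta_n^{(r)})^2\le 2x^2(1-\beta_n^{(r)})$; these are equivalent in spirit and yield the identical bound.
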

\begin{proof}
\begin{enumerate}[(i)]
\item Using the definition of $q$-Riemann integral given by (\ref{Riman1}) and combining (\ref{eq3})-(\ref{eq3*}), we have the result.
\item From (\ref{eq4*}), we have
\begin{eqnarray*}
\mathfrak{K}_{n,q}^{\beta^{(1)},\dotsb,\beta^{(r)}}(s;x)&=& \left\{\prod\limits_{k=1}^{r}(x\beta_n^{(k)};q)_n\right\}\sum\limits_{p=0}^{\infty}\bigg\{\sum\limits_{l_1+\dotsb+l_r=p}(q^{n};q)_{l_1}\dotsb(q^{n};q)_{l_r}[n+l_r-1]_q q^{-l_r}\\&\times&\frac{(\beta_n^{(1)})^{l_1}\dotsb(\beta_n^{(r)})^{l_r}}{(q;q)_{l_1}\dotsb(q;q)_{l_r}}\int_{\frac{[l_{r}]_{q}}{[n+l_{r}-1]_{q}}}^{\frac{[l_{r}+1]_{q}}{[n+l_{r}-1]_{q}}} s~ {\rm d_q^R}s\bigg\}x^p\\&=&\left\{\prod\limits_{k=1}^{r}(x\beta_n^{(k)};q)_n\right\}\sum\limits_{p=0}^{\infty}\bigg\{\sum\limits_{l_1+\dotsb+l_r=p}(q^{n};q)_{l_1}\dotsb(q^{n};q)_{l_r}[n+l_r-1]_q q^{-l_r}\\&\times&\frac{(\beta_n^{(1)})^{l_1}\dotsb(\beta_n^{(r)})^{l_r}}{(q;q)_{l_1}\dotsb(q;q)_{l_r}}\frac{1}{[n+l_r-1]_q^2}\big\{q^{l_r}[l_r]_q+\frac{q^{2l_r}}{[2]_q}\big\}\bigg\}x^p\\
&=&\left\{\prod\limits_{k=1}^{r}(x\beta_n^{(k)};q)_n\right\}\sum\limits_{p=0}^{\infty}\bigg\{{\sum\limits_{l_1+\dotsb+l_r=p}}(q^{n};q)_{l_1}\dotsb(q^{n};q)_{l_r}\frac{[l_{r}]_{q}}{[n+l_{r}-1]_{q}}\frac{(\beta_n^{(1)})^{l_1}\dotsb(\beta_n^{(r)})^{l_r}}{(q;q)_{l_1}\dotsb(q;q)_{l_r}}\bigg\}x^p\\
&+&\left\{\prod\limits_{k=1}^{r}(x\beta_n^{(k)};q)_n\right\}\sum\limits_{p=0}^{\infty}\bigg\{{\sum\limits_{l_1+\dotsb+l_r=p}}(q^{n};q)_{l_1}\dotsb(q^{n};q)_{l_r}\frac{q^{l_r}}{[2]_q[n+l_{r}-1]_{q}}\frac{(\beta_n^{(1)})^{l_1}\dotsb(\beta_n^{(r)})^{l_r}}{(q;q)_{l_1}\dotsb(q;q)_{l_r}}\bigg\}x^p.
\end{eqnarray*}
Since, $\frac{[l_r]_{q}}{(q;q)_{l_r}}=\frac{1}{(1-q)(q;q)_{l_r-1}},$ $\frac{(q^{n};q)_{l_r}}{[n+l_r-1]_{q}}=(1-q)(q^{n};q)_{l_r-1}$ and $\frac{q^{l_r}}{[n+l_r-1]_{q}} \leq \frac{1}{[n]_q}$, we have
\begin{eqnarray*}
\mathfrak{K}_{n,q}^{\beta^{(1)},\dotsb,\beta^{(r)}}(s;x) &\leq& x\beta_{n}^{(r)} \left\{\prod\limits_{k=1}^{r}(x\beta_n^{(k)};q)_n\right\}\sum\limits_{p=1}^{\infty}\bigg\{\underset{l_r\geq 1}{\sum\limits_{l_1+\dotsb+l_r-1=p-1}}(q^{n};q)_{l_1}\dotsb(q^{n};q)_{l_r-1} \\
&&\frac{(\beta_n^{(1)})^{l_1}\dotsb(\beta_n^{(r)})^{l_r-1}}{(q;q)_{l_1}\dotsb(q;q)_{l_r-1}}\bigg\}x^{p-1}\\&+&\frac{1}{[2]_q[n]_{q}}\left\{\prod\limits_{k=1}^{r}(x\beta_n^{(k)};q)_n\right\}\sum\limits_{p=0}^{\infty}\bigg\{{\sum\limits_{l_1+\dotsb+l_r=p}}(q^{n};q)_{l_1}\dotsb(q^{n};q)_{l_r}\frac{(\beta_n^{(1)})^{l_1}\dotsb(\beta_n^{(r)})^{l_r}}{(q;q)_{l_1}\dotsb(q;q)_{l_r}}\bigg\}x^p\\
&\leq& x\beta_{n}^{(r)} \left\{\prod\limits_{k=1}^{r}(x\beta_n^{(k)};q)_n\right\}\sum\limits_{p=1}^{\infty}h^{(n,\dotsb,n)}_{p-1,q}(\beta_n^{(1)},\beta_n^{(2)},\dotsb, \beta_n^{(r)})x^{p-1}\\&+& \frac{1}{[2]_q[n]_{q}}\left\{\prod\limits_{k=1}^{r}(x\beta_n^{(k)};q)_n\right\}\sum\limits_{p=0}^{\infty}h^{(n,\dotsb,n)}_{p,q}(\beta_n^{(1)},\beta_n^{(2)},\dotsb, \beta_n^{(r)})x^{p}, \quad \text{from}~ (\ref{eq3})\\
&\leq & x\beta_{n}^{(r)}+\frac{1}{[2]_q[n]_{q}}, \quad \text{in view of}~ (\ref{part1}).
\end{eqnarray*}
Using above inequality we can get
\begin{eqnarray}\label{Riman2}
\mathfrak{K}_{n,q}^{\beta^{(1)},\dotsb,\beta^{(r)}}(s;x)-x\leq x\left(\beta_{n}^{(r)}-1\right)+\frac{1}{[2]_q[n]_{q}}.
\end{eqnarray}
On the other hand, we have
\begin{eqnarray}\label{Riman3}
\mathfrak{K}_{n,q}^{\beta^{(1)},\dotsb,\beta^{(r)}}(s;x)&\geq&\left\{\prod\limits_{k=1}^{r}(x\beta_n^{(k)};q)_n\right\}\sum\limits_{p=1}^{\infty}\bigg\{\underset{l_r\geq 1}{\sum\limits_{l_1+\dotsb+l_r=p}}(q^{n};q)_{l_1}\dotsb(q^{n};q)_{l_r}\frac{[l_{r}]_{q}}{[n+l_{r}-1]_{q}}\frac{(\beta_n^{(1)})^{l_1}\dotsb(\beta_n^{(r)})^{l_r}}{(q;q)_{l_1}\dotsb(q;q)_{l_r}}\bigg\}x^p\nonumber\\&\geq& x\beta_{n}^{(r)}.
\end{eqnarray}
Thus by equation (\ref{Riman2}) and (\ref{Riman3}), we get
$$\big|\mathfrak{K}_{n,q}^{\beta^{(1)},\dotsb,\beta^{(r)}}(s;x)-x\big|~\leq ~ x(1-\beta_{n}^{(r)})+ \frac{1}{[2]_q[n]_{q}}.$$
\item Since from $(\ref{eq4*})$, we have
\begin{eqnarray*}
\mathfrak{K}_{n,q}^{\beta^{(1)},\dotsb,\beta^{(r)}}(s^{2};x) &=& \left\{\prod\limits_{k=1}^{r}(x\beta_n^{(k)};q)_n\right\}\sum\limits_{p=0}^{\infty}\bigg\{\sum\limits_{l_1+\dotsb+l_r=p}(q^{n};q)_{l_1}\dotsb(q^{n};q)_{l_r}\nonumber\\
&&\times[n+l_r-1]_q q^{-l_r}\frac{(\beta_n^{(1)})^{l_1}\dotsb(\beta_n^{(r)})^{l_r}}{(q;q)_{l_1}\dotsb(q;q)_{l_r}}\int_{\frac{[l_{r}]_{q}}{[n+l_{r}-1]_{q}}}^{\frac{[l_{r}+1]_{q}}{[n+l_{r}-1]_{q}}} s^2 {\rm d_q^R}s\bigg\}x^p\nonumber
\end{eqnarray*}
\begin{eqnarray*}
\mathfrak{K}_{n,q}^{\beta^{(1)},\dotsb,\beta^{(r)}}(s^{2};x)&=&\left\{\prod\limits_{k=1}^{r}(x\beta_n^{(k)};q)_n\right\}\sum\limits_{p=0}^{\infty}\bigg\{{\sum\limits_{l_1+\dotsb+l_r=p}}(q^{n};q)_{l_1}\dotsb(q^{n};q)_{l_r}[n+l_r-1]_q q^{-l_r}\nonumber\\
&&\times \frac{1}{[n+l_{r}-1]_{q}^3}\big\{q^{l_r}[l_r]_q^2+\frac{2q^{2l_r}[l_r]_q}{[2]_q}+\frac{q^{3l_r}}{[3]_q}\big\}\frac{(\beta_n^{(1)})^{l_1}\dotsb(\beta_n^{(r)})^{l_r}}{(q;q)_{l_1}\dotsb(q;q)_{l_r}}\bigg\}x^p\nonumber
\end{eqnarray*}
\begin{eqnarray*}
&=& x\beta_{n}^{(r)} \left\{\prod\limits_{k=1}^{r}(x\beta_n^{(k)};q)_n\right\}\sum\limits_{p=1}^{\infty}\bigg\{\underset{l_r\geq 1}{\sum\limits_{l_1+\dotsb+l_r-1=p-1}}(q^{n};q)_{l_1}\dotsb(q^{n};q)_{l_r-1}\nonumber\\
&&\bigg(\frac{1+q[l_{r}-1]_{q}}{[n+l_{r}-1]_{q}}\bigg)\frac{(\beta_n^{(1)})^{l_1}\dotsb(\beta_n^{(r)})^{l_r-1}}{(q;q)_{l_1}\dotsb(q;q)_{l_r-1}}\bigg\}x^{p-1}\nonumber\\&+& \frac{2}{[2]_q} \left\{\prod\limits_{k=1}^{r}(x\beta_n^{(k)};q)_n\right\}\sum\limits_{p=0}^{\infty}\bigg\{{\sum\limits_{l_1+\dotsb+l_r=p}}(q^{n};q)_{l_1}\dotsb(q^{n};q)_{l_r}\nonumber\\
&&\frac{q^{l_r}[l_{r}]_{q}}{[n+l_{r}-1]_{q}^2}\frac{(\beta_n^{(1)})^{l_1}\dotsb(\beta_n^{(r)})^{l_r}}{(q;q)_{l_1}\dotsb(q;q)_{l_r}}\bigg\}x^{p}\nonumber\\&+& \frac{1}{[3]_q} \left\{\prod\limits_{k=1}^{r}(x\beta_n^{(k)};q)_n\right\}\sum\limits_{p=0}^{\infty}\bigg\{{\sum\limits_{l_1+\dotsb+l_r=p}}(q^{n};q)_{l_1}\dotsb(q^{n};q)_{l_r}\nonumber\\
&&\frac{q^{2l_r}}{[n+l_{r}-1]_{q}^2}\frac{(\beta_n^{(1)})^{l_1}\dotsb(\beta_n^{(r)})^{l_r}}{(q;q)_{l_1}\dotsb(q;q)_{l_r}}\bigg\}x^{p}\nonumber\\
&=& \sum_1+\sum_2+\sum_3+\sum_4 \quad \text{say.}
\end{eqnarray*}
Now,
\begin{eqnarray}\label{eq4**}
\sum_{1}&=& x\beta_{n}^{(r)} \left\{\prod\limits_{k=1}^{r}(x\beta_n^{(k)};q)_n\right\}\sum\limits_{p=1}^{\infty}\bigg\{\underset{l_r\geq 1}{\sum\limits_{l_1+\dotsb+l_r-1=p-1}}(q^{n};q)_{l_1}\dotsb(q^{n};q)_{l_r-1}\frac{1}{[n+l_r-1]_{q}}\nonumber\\
&&\frac{(\beta_n^{(1)})^{l_1}\dotsb(\beta_n^{(r)})^{l_r-1}}{(q;q)_{l_1}\dotsb(q;q)_{l_r-1}}\bigg\}x^{p-1}\nonumber\\
&\leq & \frac{x\beta_{n}^{(r)}}{[n]_q},\quad \text{using}~ \frac{1}{[n+l_r-1]_{q}} \leq \frac{1}{[n]_q}~ \text{and}~(\ref{part1}).
\end{eqnarray}

Also,
\begin{eqnarray*}
\sum_{2}&=& q x\beta_{n}^{(r)} \left\{\prod\limits_{k=1}^{r}(x\beta_n^{(k)};q)_n\right\}\sum\limits_{p=1}^{\infty}\bigg\{\underset{l_r\geq 1}{\sum\limits_{l_1+\dotsb+l_r-1=p-1}}(q^{n};q)_{l_1}\dotsb(q^{n};q)_{l_r-1}\nonumber\\
&&\frac{[l_r-1]_{q}}{[n+l_r-1]_{q}}\frac{(\beta_n^{(1)})^{l_1}\dotsb(\beta_n^{(r)})^{l_r-1}}{(q;q)_{l_1}\dotsb(q;q)_{l_r-1}}\bigg\}x^{p-1}\\
&=& q (x\beta_{n}^{(r)})^2 \left\{\prod\limits_{k=1}^{r}(x\beta_n^{(k)};q)_n\right\}\sum\limits_{p=2}^{\infty}\bigg\{\underset{l_r\geq 2}{\sum\limits_{l_1+\dotsb+l_r-2=p-2}}(q^{n};q)_{l_1}\dotsb(q^{n};q)_{l_r-2}\\
&&\frac{1-q^{l_r-1}}{(1-q)[n+l_r-1]_{q}}\frac{(1-q^{n+l_r-2})}{1-q^{l_r-1}}\frac{(\beta_n^{(1)})^{l_1}\dotsb(\beta_n^{(r)})^{l_r-2}}{(q;q)_{l_1}\dotsb(q;q)_{l_r-2}}\bigg\}x^{p-2}
\end{eqnarray*}
\begin{eqnarray*}
&=& q (x\beta_{n}^{(r)})^2 \left\{\prod\limits_{k=1}^{r}(x\beta_n^{(k)};q)_n\right\}\sum\limits_{p=2}^{\infty}\bigg\{\underset{l_r\geq 2}{\sum\limits_{l_1+\dotsb+l_r-2=p-2}}(q^{n};q)_{l_1}\dotsb(q^{n};q)_{l_r-2}\\
&&\frac{[n+l_r-2]_{q}}{[n+l_r-1]_{q}}\frac{(\beta_n^{(1)})^{l_1}\dotsb(\beta_n^{(r)})^{l_r-2}}{(q;q)_{l_1}\dotsb(q;q)_{l_r-2}}\bigg\}x^{p-2}.
\end{eqnarray*}
Since $\frac{[n+l_r-2]_{q}}{[n+l_r-1]_{q}} < 1$, using (\ref{part1}), we get
\begin{eqnarray}\label{eq4***}
\sum_{2} &\leq & q (x\beta_{n}^{(r)})^{2}.
\end{eqnarray}
Similarly, using $\frac{1}{[n+l_r-1]_{q}} \leq \frac{1}{[n]_q}$ and $\frac{1}{[n+l_r-1]_{q}^2} \leq \frac{1}{[n]_q^2}$,  we obtain
\begin{eqnarray}\label{Riman4}
\sum_3 &=& x\beta_{n}^{(r)}\frac{2}{[2]_q[n]_q},
\end{eqnarray}
and
\begin{eqnarray}\label{Riman5}
\sum_4 &=& \frac{1}{[3]_q[n]_q^2}, \quad \text{respectively}.
\end{eqnarray}
Finally, combining the equations $(\ref{eq4**})-(\ref{Riman5})$, we have
\begin{eqnarray}\label{eq6}
\mathfrak{K}_{n,q}^{\beta^{(1)},\dotsb,\beta^{(r)}}(s^{2};x) & \leq &\frac{1}{[3]_q[n]_q^2}+\frac{x\beta_{n}^{(r)}}{[n]_q}\left(1+\frac{2}{[2]_q}\right) + q (x\beta_{n}^{(r)})^{2}.
\end{eqnarray}
Now, from $(\ref{eq6})$
\begin{eqnarray*}
\mathfrak{K}_{n,q}^{\beta^{(1)},\dotsb,\beta^{(r)}}(s^{2};x)-x^{2} & \leq & x^{2} (q(\beta_{n}^{(r)})^{2}-1)+ \frac{1}{[3]_q[n]_q^2}+\frac{x\beta_{n}^{(r)}}{[n]_q}\left(1+\frac{2}{[2]_q}\right)\\& = &-x^{2}(1-q(\beta_{n}^{(r)})^{2})+\frac{1}{[3]_q[n]_q^2}+\frac{x\beta_{n}^{(r)}}{[n]_q}\left(1+\frac{2}{[2]_q}\right) .
\end{eqnarray*}
Since $q,\beta_{n}^{(r)} \in (0,1)$, we get
\begin{eqnarray}\label{eq7}
\mathfrak{K}_{n,q}^{\beta^{(1)},\dotsb,\beta^{(r)}}(s^{2};x)-x^{2} & \leq & \frac{1}{[3]_q[n]_q^2}+\frac{x\beta_{n}^{(r)}}{[n]_q}\left(1+\frac{2}{[2]_q}\right) .
\end{eqnarray}
Using the positivity and linearity of the operators, and equation $(\ref{Riman3})$, we have
\begin{eqnarray*}
0 \leq \mathfrak{K}_{n,q}^{\beta^{(1)},\dotsb,\beta^{(r)}}((s-x)^{2};x)&=&\mathfrak{K}_{n,q}^{\beta^{(1)},\beta^{(2)}}(s^{2};x)-2x\mathfrak{K}_{n,q}^{\beta^{(1)},\dotsb,\beta^{(r)}}(s;x)+x^{2}\\
\text{or,}\quad -2x^{2}(1-\beta_{n}^{(r)}) &\leq & \mathfrak{K}_{n,q}^{\beta^{(1)},\dotsb,\beta^{(r)}}(s^{2};x)-x^{2}\\
\text{or,}\quad -2x^{2}(1-\beta_{n}^{(r)})-\frac{1}{[3]_q[n]_q^2}-\frac{x\beta_{n}^{(r)}}{[n]_q}\left(1+\frac{2}{[2]_q}\right) &\leq & \mathfrak{K}_{n,q}^{\beta^{(1)},\dotsb,\beta^{(r)}}(s^{2};x)-x^{2}.
\end{eqnarray*}
Hence, in view of $(\ref{eq7})$
\begin{eqnarray*}
-2x^{2}(1-\beta_{n}^{(r)})-\frac{1}{[3]_q[n]_q^2}-\frac{x\beta_{n}^{(r)}}{[n]_q}\left(1+\frac{2}{[2]_q}\right) &\leq & \mathfrak{K}_{n,q}^{\beta^{(1)},\dotsb,\beta^{(r)}}(s^{2};x)-x^{2}\\& < &\frac{1}{[3]_q[n]_q^2}+\frac{x\beta_{n}^{(r)}}{[n]_q}\left(1+\frac{2}{[2]_q}\right) + 2x^{2}(1-\beta_{n}^{(r)}),
\end{eqnarray*}
thus $$|\mathfrak{K}_{n,q}^{\beta^{(1)},\dotsb,\beta^{(r)}}(s^{2};x)-x^{2}| \leq \frac{1}{[3]_q[n]_q^2}+\frac{x\beta_{n}^{(r)}}{[n]_q}\left(1+\frac{2}{[2]_q}\right) + 2x^{2}(1-\beta_{n}^{(r)}).$$
\end{enumerate}
\end{proof}

\begin{lemma}\label{lem2}
For the operators $\mathfrak{K}_{n,q}^{\beta^{(1)},\dotsb,\beta^{(r)}}(.;x)$, we have the inequality
\begin{eqnarray*}
\bigg|\mathfrak{K}_{n,q}^{\beta^{(1)},\dotsb,\beta^{(r)}}\big((s-x)^2;x\big)\bigg| & \leq & 2x(1+x)(1-\beta_{n}^{(r)}) + \frac{x}{[n]_{q}}\bigg(\beta_{n}^{(r)}\bigg(1+\frac{2}{[2]_q}\bigg)+\frac{2}{[2]_q}\bigg)+\frac{1}{[3]_q[n]_q^2}\\
&\leq & 4(1-\beta_{n}^{(r)}) +\frac{1}{[n]_{q}}\bigg(\beta_{n}^{(r)}\bigg(1+\frac{2}{[2]_q}\bigg)+\frac{2}{[2]_q}\bigg)+\frac{1}{[3]_q[n]_q^2}\\
&=& \gamma_{n,q}(\beta_{n}^{(r)}), \quad \text{say.}
\end{eqnarray*}
\begin{proof}
We can write
\begin{eqnarray*}
\bigg|\mathfrak{K}_{n,q}^{\beta^{(1)},\dotsb,\beta^{(r)}}\big((s-x)^2;x\big)\bigg| &\leq & \bigg|\mathfrak{K}_{n,q}^{\beta^{(1)},\dotsb,\beta^{(r)}}\big(s^2-x^2;x\big)\bigg|+ 2x \bigg|\mathfrak{K}_{n,q}^{\beta^{(1)},\dotsb,\beta^{(r)}}\big(s-x;x\big)\bigg|.
\end{eqnarray*}
Now, using Lemma \ref{lem1}, we obtain the required inequality.
\end{proof}
\end{lemma}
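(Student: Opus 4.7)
The plan is to reduce the second-moment-type quantity $\mathfrak{K}_{n,q}^{\beta^{(1)},\dotsb,\beta^{(r)}}\big((s-x)^2;x\big)$ to the two quantities already estimated in Lemma~\ref{lem1}. Expanding $(s-x)^2 = (s^2-x^2) - 2x(s-x)$ and invoking the linearity of $\mathfrak{K}_{n,q}^{\beta^{(1)},\dotsb,\beta^{(r)}}$ together with part~(\ref{part1}) (so that $x^2$ and $2x^2$ can be absorbed into $\mathfrak{K}(s^2-x^2;x)$ and $\mathfrak{K}(s-x;x)$ respectively), the triangle inequality yields
\begin{equation*}
\bigl|\mathfrak{K}_{n,q}^{\beta^{(1)},\dotsb,\beta^{(r)}}\big((s-x)^2;x\big)\bigr|
\leq \bigl|\mathfrak{K}_{n,q}^{\beta^{(1)},\dotsb,\beta^{(r)}}(s^2-x^2;x)\bigr| + 2x\,\bigl|\mathfrak{K}_{n,q}^{\beta^{(1)},\dotsb,\beta^{(r)}}(s-x;x)\bigr|,
\end{equation*}
which is exactly the decomposition the authors set up.

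Next I would substitute the two estimates from Lemma~\ref{lem1}(ii) and (iii). The first term contributes $\frac{1}{[3]_q[n]_q^2} + \frac{x\beta_{n}^{(r)}}{[n]_q}\bigl(1+\tfrac{2}{[2]_q}\bigr) + 2x^2(1-\beta_{n}^{(r)})$, and the second contributes $2x^2(1-\beta_{n}^{(r)}) + \frac{2x}{[2]_q[n]_q}$. Adding these and collecting the $(1-\beta_n^{(r)})$ terms, the sum becomes $4x^2(1-\beta_n^{(r)}) + \frac{1}{[3]_q[n]_q^2} + \frac{x\beta_n^{(r)}}{[n]_q}\bigl(1+\tfrac{2}{[2]_q}\bigr) + \frac{2x}{[2]_q[n]_q}$, and the last two $\frac{x}{[n]_q}$-scaled pieces can be grouped as $\frac{x}{[n]_q}\bigl(\beta_n^{(r)}(1+\tfrac{2}{[2]_q})+\tfrac{2}{[2]_q}\bigr)$.

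To match the stated form $2x(1+x)(1-\beta_n^{(r)}) + \cdots$, I would use $x\in I=[0,1]$ to write $4x^2(1-\beta_n^{(r)}) = 2x^2(1-\beta_n^{(r)}) + 2x^2(1-\beta_n^{(r)}) \leq 2x^2(1-\beta_n^{(r)}) + 2x(1-\beta_n^{(r)}) = 2x(1+x)(1-\beta_n^{(r)})$, giving the first displayed inequality. For the second (the definition of $\gamma_{n,q}(\beta_n^{(r)})$), I would again invoke $x \leq 1$ on each $x$-factor: $2x(1+x) \leq 4$ and $\frac{x}{[n]_q}(\cdot) \leq \frac{1}{[n]_q}(\cdot)$, yielding the uniform bound.

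There is no genuine obstacle here; the proof is essentially a careful bookkeeping exercise once Lemma~\ref{lem1} is in hand. The only point that requires a moment's attention is noticing that the ``$4x^2$'' coming out of the addition must be split as $2x^2+2x^2$ and then one copy bounded by $2x$, which produces the symmetric factor $2x(1+x)$ appearing in the statement; otherwise one obtains a crude but equivalent bound.
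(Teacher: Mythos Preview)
Your proposal is correct and follows exactly the paper's approach: the same decomposition $\bigl|\mathfrak{K}((s-x)^2;x)\bigr|\le\bigl|\mathfrak{K}(s^2-x^2;x)\bigr|+2x\bigl|\mathfrak{K}(s-x;x)\bigr|$ followed by the estimates of Lemma~\ref{lem1}. You have simply spelled out the bookkeeping (including the $4x^2\le 2x(1+x)$ step on $I$) that the paper leaves to the reader.
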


From now onwards in Sections 3 and 4, we assume that $q = \big<q_{n}\big>\in (0,1)$ such that 
$$q_{n} \to 1 \quad \text{and} \quad q_{n}^{n} \to a\in [0, 1) \quad \text{as $n\to \infty$}.$$ 

\section{Deferred weighted A-statistical Approximation process via $\mathfrak{K}_{n,q}^{\beta^{(1)},\dotsb,\beta^{(r)}}$}
Let $M$ be a subset of the set of natural numbers $\mathbb{N}$ and for each $n\in \mathbb{N}$, we define
$$M_n = \{m\in M: m\leq n \}.$$ The density ( or natural density) of the set $M$, denoted by $d(M)$, is defined by the limit(if exists) of the sequence $\big<\frac{|M_{n}|}{n}\big>$. More precisely,
$$d(M) = \lim_{n\to \infty}\frac{|M_{n}|}{n}.$$ There are many ways to define the density of the subsets of  natural numbers and these definitions are playing a pivotal role in the areas of Number Theory and Graph Theory (see \cite{NRai, Pandey}). Any sequence $\big<x_{n}\big> $ is called statistically convergent to $l$ if, for each $\epsilon >0$, we have the following
$$\lim_{n\to \infty}\frac{|\{k\in \mathbb{N}:k\leq n ~\text{and}~|x_{k}-l|\geq \epsilon\}|}{n} = 0.$$ In this case, we write $stat\underset{n \to \infty}{\lim}x_n = l$. The definition shows that every convergent sequence is always statistically convergent, while the converse need not to be true in general. Karakaya et al. \cite{Kara} derived the concept of weighted statistical convergence and the idea later modified by Mursaleen et al. in \cite{Mur}.\newline
Let us assume that $\big<s_{k}\big>$ be a sequence such that $s_k \geq 0$ and
$$S_n = \sum_{k=1}^{n}s_k, \quad s_1 >0,$$
denotes its partial sum. Now, set
$$u_n = \frac{1}{S_n}\sum_{k=1}^{n}s_k x_k, \quad n\in \mathbb{N}. $$ Then, the sequence $\big<x_n\big>$ is called weighted statistically convergent to a number $l$ if, for any given $\epsilon > 0$, the following holds
$$\lim_{n\to \infty}\frac{|\{k\in \mathbb{N}:k\leq S_n ~\text{and}~s_k|x_{k}-l|\geq \epsilon\}|}{S_n} = 0,$$ and we write $stat_w\underset{n\to \infty}{\lim}x_n=l$. If $X_1$ and $X_2$ are sequence spaces such that for every infinite matrix $A=(a_{n,k}):X \to Y$, we have $(Ax)_n = \sum_{k=1}^{\infty}a_{n,k}x_{k}$. Then, the matrix $A$ is called regular if $\underset{n\to \infty}{\lim} (Ax)_n= l$ whenever $\underset{k\to \infty}{\lim} (x)_k= l$. For a non-negative regular matrix $A=(a_{n,k})$, Freedman et al. \cite{Free} defined the idea of $A$-statistical convergence. The sequence $(x)_n$ is called $A$-statistically convergent to a number $l$, denoted by $stat_A\underset{n \to \infty}{\lim}x_n = l$, if
$$\lim_{n \to \infty}\sum_{k:|x_n-l|\geq \epsilon}a_{n,k}=0, \quad ~ \text{for every}~\epsilon >0.$$
Very recently, Srivastava et al. \cite{HM} derived a more general concept of $A$-statistical convergence and called it deferred weighted $A$-statistical convergence. Suppose $(b_n)$ and $(c_n)$ are the sequences of non-negative integers satisfying the regularity conditions $b_n < c_n; \underset{n\to \infty}{\lim}c_n=\infty$. Now, we set
$$S_n = \sum_{m=b_n+1}^{c_n}s_m,$$ for any given sequence $(s_n)$ of non-negative real numbers and its respective deferred weighted mean by $\rho_n= \frac{1}{S_n}\sum_{m=b_n+1}^{c_n}s_mx_m$. Then, the sequence $(x_n)$ is called deferred weighted summable (denoted by $c^{DWS}-\underset{n\to \infty}{\lim}x_n =l$) to $l$ if $\underset{n \to \infty}{\lim}\rho_n = l$. Also, we call $(x_n)$ to be deferred weighted A-summable to (denoted by $c^{DWS}_A-\underset{n\to \infty}{\lim}x_n =l$) a number $l$ if
$$\lim_{n\to \infty}\frac{1}{S_n}\sum_{m=b_n+1}^{c_n}\sum_{k=1}^{\infty}s_ma_{m,k}x_k = l.$$ Let $c^{DWS}$ be the space of all deferred weighted summable sequences and $(b_n),(c_n)$ are the sequences of non-negative integers. Then an infinite matrix $A=(a_{n,k})$, is called deferred weighted regular matrix if
$$(Ax)_n=\sum_{k=1}^{\infty}a_{n,k}x_k\in c^{DWS} \quad \text{for every convergent sequence}~ x=(x_n),$$
with $$c^{DWS}-\underset{n \to \infty}{\lim}(Ax)_n = stat_A \underset{n \to \infty}{\lim}x_n.$$ For a non-negative deferred weighted regular matrix $A=(a_{n,k})$ and $K_{\epsilon} \subset \mathbb{N}=\{k\in\mathbb{N}:|x_k-l|\geq \epsilon\}$, a sequence $\big<x_n\big>$ is said to be deferred weighted $A$-statistically convergent to $l$ (denoted by $stat_{A}^{DW}-\underset{n \to \infty}{\lim}x_n = l$) if, for each $\epsilon >0$, the deferred weighted $A$-density of $K_{\epsilon}$ denoted by $d^A_{DW}(K_{\epsilon})$ is zero. That is
\begin{eqnarray*}
d^A_{DW}(K_{\epsilon})= \lim_{n\to \infty}\frac{1}{S_n}\sum_{m=b_n+1}^{c_n}\sum_{k\in K_{\epsilon}}s_ma_{m,k}= 0.
\end{eqnarray*}
In our further consideration, in this section we assume $A=(a_{n,k})$ to be a non-negative deferred weighted regular matrix. \newline The following theorem shows the deferred weighted $A$-statistical convergence of the operators $\mathfrak{K}_{n,q_n}^{\beta^{(1)},\dotsb,\beta^{(r)}}(.;x)$ defined by $(\ref{eq5})$.
\begin{theorem}
For $f \in \mathfrak{C}(I)$,
\begin{eqnarray*}
stat_{A}^{DW}-\lim_{n\to \infty}\norm{\mathfrak{K}_{n,q_n}^{\beta^{(1)},\dotsb,\beta^{(r)}}(f)-f} &=& 0,
\end{eqnarray*}
if and only if $stat_{A}^{DW}-\underset{n \to \infty}{\lim}\norm{\mathfrak{K}_{n,q_n}^{\beta^{(1)},\dotsb,\beta^{(r)}}(f_i)-f_i}=0$ for $i = 1,2$ where $f_{i}(s)= s^{i}$.
\end{theorem}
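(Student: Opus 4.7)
The plan is to prove the theorem as a Korovkin-type statement: the forward direction ($\Longrightarrow$) is immediate since $f_i(s)=s^i \in \mathfrak{C}(I)$ for $i=1,2$; the substantive content is the reverse direction, which I will establish via the classical Korovkin quadratic-majorization trick adapted to the deferred weighted $A$-statistical setting. Crucially, by Lemma \ref{lem1}(i) we already have $\mathfrak{K}_{n,q_n}^{\beta^{(1)},\dots,\beta^{(r)}}(1;x)=1$, so the test function $f_0 \equiv 1$ is automatic and no separate hypothesis on it is required.

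For the reverse direction, fix $f\in\mathfrak{C}(I)$ and $\epsilon>0$. By uniform continuity there is $\delta>0$ such that $|f(s)-f(x)|<\epsilon$ whenever $|s-x|<\delta$, and for $|s-x|\geq\delta$ the crude bound $|f(s)-f(x)|\leq 2\|f\|\leq (2\|f\|/\delta^2)(s-x)^2$ gives, on all of $I$,
\[
|f(s)-f(x)|\;\leq\;\epsilon+\frac{2\|f\|}{\delta^2}(s-x)^2.
\]
Applying the positive linear operator $\mathfrak{K}_{n,q_n}^{\beta^{(1)},\dots,\beta^{(r)}}(\,\cdot\,;x)$ and invoking Lemma \ref{lem1}(i) to absorb the constant term, I obtain
\[
\bigl|\mathfrak{K}_{n,q_n}^{\beta^{(1)},\dots,\beta^{(r)}}(f;x)-f(x)\bigr|\;\leq\;\epsilon+\frac{2\|f\|}{\delta^2}\,\mathfrak{K}_{n,q_n}^{\beta^{(1)},\dots,\beta^{(r)}}\bigl((s-x)^2;x\bigr).
\]
Expanding $(s-x)^2=s^2-2xs+x^2$ using Lemma \ref{lem1}(i) again and the bound $|x|\leq 1$, the central second moment splits as
\[
\mathfrak{K}_{n,q_n}^{\beta^{(1)},\dots,\beta^{(r)}}\bigl((s-x)^2;x\bigr)=\bigl[\mathfrak{K}_{n,q_n}^{\beta^{(1)},\dots,\beta^{(r)}}(f_2;x)-f_2(x)\bigr]-2x\bigl[\mathfrak{K}_{n,q_n}^{\beta^{(1)},\dots,\beta^{(r)}}(f_1;x)-f_1(x)\bigr],
\]
so taking suprema over $x\in I$ yields the decisive estimate
\[
\bigl\|\mathfrak{K}_{n,q_n}^{\beta^{(1)},\dots,\beta^{(r)}}(f)-f\bigr\|\;\leq\;\epsilon+C\Bigl(\bigl\|\mathfrak{K}_{n,q_n}^{\beta^{(1)},\dots,\beta^{(r)}}(f_2)-f_2\bigr\|+2\bigl\|\mathfrak{K}_{n,q_n}^{\beta^{(1)},\dots,\beta^{(r)}}(f_1)-f_1\bigr\|\Bigr),
\]
where $C=2\|f\|/\delta^2$.

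To pass from this pointwise-in-$n$ inequality to deferred weighted $A$-statistical convergence, fix $\epsilon'>0$, choose $\epsilon=\epsilon'/2$ in the argument above, and set
\[
K_{\epsilon'}=\{n:\|\mathfrak{K}_{n,q_n}^{\beta^{(1)},\dots,\beta^{(r)}}(f)-f\|\geq\epsilon'\},\qquad K_{\epsilon'}^{(i)}=\bigl\{n:\|\mathfrak{K}_{n,q_n}^{\beta^{(1)},\dots,\beta^{(r)}}(f_i)-f_i\|\geq \tfrac{\epsilon'}{6C}\bigr\}\;\;(i=1,2).
\]
The estimate forces $K_{\epsilon'}\subseteq K_{\epsilon'}^{(1)}\cup K_{\epsilon'}^{(2)}$, so by finite subadditivity of the deferred weighted $A$-density,
\[
d^{A}_{DW}(K_{\epsilon'})\;\leq\;d^{A}_{DW}(K_{\epsilon'}^{(1)})+d^{A}_{DW}(K_{\epsilon'}^{(2)})=0+0=0,
\]
the two summands vanishing by the hypothesis $stat_{A}^{DW}\text{-}\lim\|\mathfrak{K}_{n,q_n}^{\beta^{(1)},\dots,\beta^{(r)}}(f_i)-f_i\|=0$ for $i=1,2$.

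The proof is essentially bookkeeping rather than delicate analysis: the Korovkin quadratic-majorization chain is standard and the test-function hypotheses directly control the central second moment. The only nontrivial point to check, which I anticipate as the main (minor) obstacle, is the subadditivity $d^{A}_{DW}(E\cup F)\leq d^{A}_{DW}(E)+d^{A}_{DW}(F)$ in the deferred weighted $A$-density framework; this follows at once from the non-negativity of the entries $s_m a_{m,k}$ in the defining double sum $S_n^{-1}\sum_{m=b_n+1}^{c_n}\sum_{k\in\cdot}s_ma_{m,k}$, applied with the indicator inequality $\mathbf{1}_{E\cup F}\leq\mathbf{1}_E+\mathbf{1}_F$, and it is the reason a single appeal to the two zero-density sets $K_{\epsilon'}^{(1)},K_{\epsilon'}^{(2)}$ closes the argument.
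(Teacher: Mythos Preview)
Your proof is correct and follows essentially the same route as the paper: both establish the forward direction trivially, then use the standard Korovkin quadratic majorization $|f(s)-f(x)|\leq\epsilon+\frac{2\|f\|}{\delta^2}(s-x)^2$ together with Lemma~\ref{lem1}(i), split the central moment into the $f_1$ and $f_2$ deviations, and conclude by a set-inclusion/subadditivity argument for the deferred weighted $A$-density. Your bookkeeping is in fact tidier than the paper's (your thresholds $\epsilon'/(6C)$ are chosen consistently, and you make explicit why $d^A_{DW}$ is subadditive), but the substance is identical.
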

\begin{proof}
The necessary part is trivial. For the converse, let us assume that $stat_{A}^{DW}-\underset{n \to \infty}{\lim}\norm{\mathfrak{K}_{n,q_n}^{\beta^{(1)},\dotsb,\beta^{(r)}}(f_i)-f_i}=0$ is true. For $f\in \mathfrak{C}(I)$ there exists a positive constant $M_f$ such that
\begin{eqnarray*}
\big| f(s)-f(x)\big| &\leq & 2M_f, \quad ~ \text{for all}~ s,x\in I.
\end{eqnarray*}
In view of the uniform continuity of $f$ on $I$, for any $\epsilon > 0~ \exists~ \delta >0$, such that $|f(s)-f(x)| < \epsilon$ whenever $|s-x| < \delta$. Hence, for all $s,x \in I$, we can write
\begin{eqnarray}\label{15}
\big| f(s)-f(x)\big| &\leq & \epsilon + \frac{2M_f}{\delta^2}(s-x)^2.
\end{eqnarray}
Thus, applying the operators $\mathfrak{K}_{n,q}^{\beta^{(1)},\dotsb,\beta^{(r)}}(.;x)$ on the above equation, we obtain
\begin{eqnarray*}
\big|\mathfrak{K}_{n,q_n}^{\beta^{(1)},\dotsb,\beta^{(r)}}(f;x)-f(x)\big| & \leq & \epsilon +
\frac{2M_f}{\delta^2}\Big(\big|\mathfrak{K}_{n,q_n}^{\beta^{(1)},\dotsb,\beta^{(r)}}(s^2;x)-x^2\big|+ 2|x|\big|\mathfrak{K}_{n,q_n}^{\beta^{(1)},\dotsb,\beta^{(r)}}(s;x)-x\big|\Big).
\end{eqnarray*}
Using Lemma \ref{lem1} and considering sup norm, we have the following inequality
\begin{eqnarray*}
\norm{\mathfrak{K}_{n,q_n}^{\beta^{(1)},\dotsb,\beta^{(r)}}(f)-f} &\leq & \epsilon + \frac{2M_f}{\delta^2}\Big(\norm{\mathfrak{K}_{n,q_n}^{\beta^{(1)},\dotsb,\beta^{(r)}}(s^2)-x^2}+ 2\norm{\mathfrak{K}_{n,q_n}^{\beta^{(1)},\dotsb,\beta^{(r)}}(s)-x}\Big).
\end{eqnarray*}
Now, for any $\epsilon' > 0$, we consider the following sets;
\begin{eqnarray*}
K_{\epsilon'} &:=& \big{\lbrace}k\in \mathbb{N}: \norm{\mathfrak{K}_{n,q_n}^{\beta^{(1)},\dotsb,\beta^{(r)}}(f)-f} \geq \epsilon'\big{\rbrace};\\
K_{\frac{\epsilon'}{2}} &:=& \bigg{\lbrace}k\in \mathbb{N}:\epsilon +\frac{2M_f}{\delta^2} \norm{\mathfrak{K}_{n,q_n}^{\beta^{(1)},\dotsb,\beta^{(r)}}(s^2)-x^2} \geq  \frac{\epsilon'}{2}\bigg{\rbrace};\\
K^{'}_{\frac{\epsilon'}{2}} &:=& \bigg{\lbrace}k\in \mathbb{N}:\frac{4M_f}{\delta^2}\norm{\mathfrak{K}_{n,q_n}^{\beta^{(1)},\dotsb,\beta^{(r)}}(s)-x} \geq \frac{\epsilon'}{2}\bigg{\rbrace},
\end{eqnarray*}
thus $K_{\epsilon'} \subset K_{\frac{\epsilon'}{2}} \cup K^{'}_{\frac{\epsilon'}{8}}$ and therefore
\begin{eqnarray}\label{13}
\frac{1}{S_n}\sum_{m=b_n+1}^{c_n}\sum_{k\in K_{\epsilon'}}s_ma_{m,k} &\leq & \frac{1}{S_n}\sum_{m=b_n+1}^{c_n}\sum_{k\in K_{\frac{\epsilon'}{2}}}s_ma_{m,k} + \frac{1}{S_n}\sum_{m=b_n+1}^{c_n}\sum_{k\in K^{'}_{\frac{\epsilon'}{2}}}s_ma_{m,k}.
\end{eqnarray}
Now, using the hypothesis and from Lemma \ref{lem1}, it is obvious that
\begin{eqnarray*}
d^A_{DW}(K_{\frac{\epsilon'}{4}}) = \lim_{n \to \infty}\frac{1}{S_n}\sum_{m=b_n+1}^{c_n}\sum_{k\in K_{\frac{\epsilon'}{4}}}s_ma_{m,k} = 0;\\
\text{and}\quad d^A_{DW}(K_{\frac{\epsilon'}{8}}) =\lim_{n \to \infty} \frac{1}{S_n}\sum_{m=b_n+1}^{c_n}\sum_{k\in K^{'}_{\frac{\epsilon'}{8}}}s_ma_{m,k} = 0.
\end{eqnarray*}
Hence, from equation (\ref{13}), we have
\begin{eqnarray*}
d^A_{DW}(K_{\epsilon'}) =\lim_{n\to \infty}\frac{1}{S_n}\sum_{m=b_n+1}^{c_n}\sum_{k\in K_{\epsilon'}}s_ma_{m,k} =  stat_A^D - \lim_{n\to \infty}\norm{\mathfrak{K}_{n,q_n}^{\beta^{(1)},\dotsb,\beta^{(r)}}(f)-f}  = 0.
\end{eqnarray*}
\end{proof}

We recall the definition of modulus of continuity. For any continuous function $f: I \to \mathbb{R}$ and a given $\delta > 0$, the modulus of continuity $\omega_{f}(\delta)$ is defined as
\begin{eqnarray*}
w_{f}(\delta) &:=& \sup_{|s-x|\leq \delta}\{|f(s)-f(x)|: s,x \in I\}.
\end{eqnarray*}
From the above definition, we have
\begin{eqnarray}\label{14}
|f(s)-f(x)| &\leq & \bigg(1+\frac{(s-x)^2}{\delta^2}\bigg)\omega_{f}(\delta).
\end{eqnarray}
Let $\big<\alpha_n\big>$ be a positive non-increasing sequence of real numbers and $\big<\delta_n\big>$ be any sequence of positive real numbers. Then, we say the sequence $\big<\omega_f(\delta_n)\big>$ is deferred weighted A-statistically convergent with $o(\alpha_n)$ if
\begin{eqnarray*}
stat_{A}^{DW} - \lim_{n \to \infty}\frac{\omega_f(\delta_n)}{\alpha_n} = 0.
\end{eqnarray*}

\begin{theorem}
For the operator $\mathfrak{K}_{n,q_n}^{\beta^{(1)},\dotsb,\beta^{(r)}}$, if
\begin{eqnarray*}
\omega_{f}\bigg(\sqrt{\gamma_{n,q_n}(\beta_{n}^{(r)})}\bigg) = stat_{A}^{D}-o(\alpha_{n}),
\end{eqnarray*}
then
\begin{eqnarray*}
\norm{\mathfrak{K}_{n,q_n}^{\beta^{(1)},\dotsb,\beta^{(r)}}(f)-f} = stat_{A}^{D}-o(\alpha_{n}).
\end{eqnarray*}
\end{theorem}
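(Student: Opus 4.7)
The plan is to follow the classical Popoviciu/Shisha--Mond route, converting the smoothness inequality \eqref{14} into a quantitative estimate via the operator, then invoking the hypothesis to pass to deferred weighted $A$-statistical $o(\alpha_n)$ behaviour.

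First I would apply $\mathfrak{K}_{n,q_n}^{\beta^{(1)},\dotsb,\beta^{(r)}}(\cdot\,;x)$ to the inequality \eqref{14}. By positivity and linearity, together with Lemma \ref{lem1}(i) which gives $\mathfrak{K}_{n,q_n}^{\beta^{(1)},\dotsb,\beta^{(r)}}(1;x)=1$, one obtains
\begin{eqnarray*}
\bigl|\mathfrak{K}_{n,q_n}^{\beta^{(1)},\dotsb,\beta^{(r)}}(f;x)-f(x)\bigr| \leq \omega_{f}(\delta)\left(1+\frac{1}{\delta^{2}}\,\mathfrak{K}_{n,q_n}^{\beta^{(1)},\dotsb,\beta^{(r)}}\!\bigl((s-x)^{2};x\bigr)\right).
\end{eqnarray*}
Next I would invoke Lemma \ref{lem2} to estimate the central second moment by $\gamma_{n,q_n}(\beta_{n}^{(r)})$, and then specialise the free parameter to $\delta=\delta_n:=\sqrt{\gamma_{n,q_n}(\beta_{n}^{(r)})}$. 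This produces the uniform estimate
\begin{eqnarray*}
\bigl\Vert \mathfrak{K}_{n,q_n}^{\beta^{(1)},\dotsb,\beta^{(r)}}(f)-f\bigr\Vert \leq 2\,\omega_{f}\!\left(\sqrt{\gamma_{n,q_n}(\beta_{n}^{(r)})}\right).
\end{eqnarray*}

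The final step is the statistical passage. Dividing by $\alpha_{n}$, for any $\epsilon'>0$ define
\begin{eqnarray*}
K_{\epsilon'} &:=& \left\{k\in\mathbb{N}:\; \frac{\bigl\Vert \mathfrak{K}_{k,q_k}^{\beta^{(1)},\dotsb,\beta^{(r)}}(f)-f\bigr\Vert}{\alpha_{k}}\geq \epsilon'\right\},\\
K^{*}_{\epsilon'/2} &:=& \left\{k\in\mathbb{N}:\; \frac{\omega_{f}\!\left(\sqrt{\gamma_{k,q_k}(\beta_{k}^{(r)})}\right)}{\alpha_{k}}\geq \frac{\epsilon'}{2}\right\}.
\end{eqnarray*}
By the displayed inequality we have $K_{\epsilon'}\subset K^{*}_{\epsilon'/2}$, hence
\begin{eqnarray*}
\frac{1}{S_{n}}\sum_{m=b_{n}+1}^{c_{n}}\sum_{k\in K_{\epsilon'}} s_{m}a_{m,k} \leq \frac{1}{S_{n}}\sum_{m=b_{n}+1}^{c_{n}}\sum_{k\in K^{*}_{\epsilon'/2}} s_{m}a_{m,k}.
\end{eqnarray*}
Since the hypothesis asserts exactly that the right--hand side tends to $0$ as $n\to\infty$, we get $d^{A}_{DW}(K_{\epsilon'})=0$ for every $\epsilon'>0$, which is the desired conclusion $\Vert \mathfrak{K}_{n,q_n}^{\beta^{(1)},\dotsb,\beta^{(r)}}(f)-f\Vert = stat_{A}^{D}-o(\alpha_{n})$.

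The main (and essentially only) obstacle is the bookkeeping in the second step: one must check that the choice $\delta_{n}=\sqrt{\gamma_{n,q_n}(\beta_{n}^{(r)})}$ is admissible (i.e.\ $\gamma_{n,q_n}(\beta_{n}^{(r)})>0$), and then package the bound so that the factor multiplying $\omega_{f}(\delta_{n})$ is an absolute constant (here $2$) independent of $n$; otherwise the transfer of the $stat_{A}^{DW}$-$o(\alpha_{n})$ property from the modulus to the approximation error would fail. Everything else is the standard Korovkin-type $A$-density inclusion argument used in the previous theorem.
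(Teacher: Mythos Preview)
Your proof is correct and follows essentially the same route as the paper: apply \eqref{14}, use Lemma~\ref{lem1}(i) and Lemma~\ref{lem2} to bound the second central moment by $\gamma_{n,q_n}(\beta_n^{(r)})$, choose $\delta=\sqrt{\gamma_{n,q_n}(\beta_n^{(r)})}$, and invoke the hypothesis. The only difference is cosmetic---the paper stops after the choice of $\delta$ and simply cites the hypothesis, whereas you spell out the $A$-density set inclusion explicitly; this extra detail is harmless (and arguably clearer).
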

\begin{proof}
From the inequality given in (\ref{14}) and using Lemma \ref{lem1} , for any $\delta > 0$, we obtain
\begin{eqnarray*}
\big|\mathfrak{K}_{n,q_n}^{\beta^{(1)},\dotsb,\beta^{(r)}}(f;x)-f(x)\big| &\leq &  \{1 + \frac{1}{\delta^2}\mathfrak{K}_{n,q_n}^{\beta^{(1)},\dotsb,\beta^{(r)}}((s-x)^2;x)\}\omega_{f}(\delta), \quad \text{for all}~ x\in I.
\end{eqnarray*}
Hence in view of Lemma \ref{lem2}, we can write
\begin{eqnarray*}
\norm{\mathfrak{K}_{n,q_n}^{\beta^{(1)},\dotsb,\beta^{(r)}}(f)-f} &\leq & \omega_{f}(\delta)\bigg(1+\frac{\gamma_{n,q_n}(\beta_{n}^{(r)})}{\delta^2}\bigg).
\end{eqnarray*}
Now, we choose $\delta =\sqrt{\gamma_{n,q_n}(\beta_{n}^{(r)})}$ and consider the hypothesis $\omega_{f}\bigg(\sqrt{\gamma_{n,q_n}(\beta_{n}^{(r)})}\bigg) = stat_{A}^{DW}-o(\alpha_{n})$, to reach the assertion.
\end{proof}

\section{Power Series Summability Approximation Process via $\mathfrak{K}_{n,q_{n}}^{\beta^{(1)},\dotsb,\beta^{(r)}}(.;x)$}
Let $\big<p_{j}\big>$ be a sequence of real numbers such that $p_1 > 0$ and $p_{j}\geq 0$, $\forall~ j = 2,3, \dotsb$.
Also, suppose that the power series
\begin{eqnarray}\label{*}
p(u) &=& \sum_{j=1}^{\infty}p_{j}u^{j-1},
\end{eqnarray}
has a radius of convergence $ R \in (0, \infty]$. Now, a sequence $\big<\eta_{j}\big>$ is said to be convergent to $l$ in the sense of power series method ( please see \cite{Bor, Kra, Sta}) if
\begin{eqnarray*}
\lim_{u \to R-}\frac{1}{p(u)}\sum_{j=1}^{\infty}\eta_{j}p_{j}u^{j-1}&=& l, \quad \forall ~ x\in(0,R).
\end{eqnarray*}
Further, the power series method is called regular \cite{Bos} if and only if
\begin{eqnarray}\label{18}
\lim_{u \to R-}\frac{p_ju^{j-1}}{p(u)} &=& 0, \quad \forall ~ j\in \mathbb{N}.
\end{eqnarray}
Recently, the power series summability method of convergence has attracted the researchers due to its nature of generality over the classical convergence \cite{Tas}. For the interested reader, we refer to \cite{Tas3, Tas2, Tas1}. The following result shows the convergence of our operators $\mathfrak{K}_{n,q_n}^{\beta^{(1)},\dotsb,\beta^{(r)}}(.;x)$ by means of power series method.
\begin{theorem}\label{thm4}
For $f \in \mathfrak{C}(I)$, the operators $\mathfrak{K}_{n,q_n}^{\beta^{(1)},\dotsb,\beta^{(r)}}(.;x)$ satisfy
\begin{eqnarray}\label{16}
\lim_{u \to R-}\frac{1}{p(u)}\sum_{n= 1}^{\infty}\norm{\mathfrak{K}_{n,q_n}^{\beta^{(1)},\dotsb,\beta^{(r)}}(f)-f}p_{n}u^{n-1} = 0,
\end{eqnarray}
if and only if
\begin{eqnarray}\label{17}
\lim_{u \to R-}\frac{1}{p(u)}\sum_{n=1}^{\infty}\norm{\mathfrak{K}_{n,q_n}^{\beta^{(1)},\dotsb,\beta^{(r)}}(f_{i})-f_{i}}p_{n}u^{n-1} = 0,
\end{eqnarray}
for $i = 1,2$ where $f_{i}(s) = s^{i}$.
\end{theorem}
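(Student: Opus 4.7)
The proof is a power-series-summability analogue of the Korovkin argument already executed for deferred weighted $A$-statistical convergence in the previous theorem, so the overall structure should mirror that proof almost verbatim. The necessity direction is immediate: if (\ref{16}) holds for every $f\in\mathfrak{C}(I)$, then specialising to $f=f_1$ and $f=f_2$ (both of which lie in $\mathfrak{C}(I)$) yields (\ref{17}). Thus the whole task reduces to proving sufficiency.

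For the converse, I would begin by fixing $f\in\mathfrak{C}(I)$ and exploiting its boundedness and uniform continuity on $I$ exactly as in the display (\ref{15}): for every $\epsilon>0$ there is $\delta>0$ with
\begin{eqnarray*}
|f(s)-f(x)|\leq \epsilon+\frac{2M_f}{\delta^2}(s-x)^2,\qquad s,x\in I.
\end{eqnarray*}
Apply $\mathfrak{K}_{n,q_n}^{\beta^{(1)},\dotsb,\beta^{(r)}}(\cdot;x)$, use its linearity and positivity, use part (\ref{part1}) of Lemma \ref{lem1} to kill the coefficient of $\epsilon$, and expand $(s-x)^2=s^2-2xs+x^2$ to obtain
\begin{eqnarray*}
\norm{\mathfrak{K}_{n,q_n}^{\beta^{(1)},\dotsb,\beta^{(r)}}(f)-f}\leq \epsilon+\frac{2M_f}{\delta^2}\Big(\norm{\mathfrak{K}_{n,q_n}^{\beta^{(1)},\dotsb,\beta^{(r)}}(f_2)-f_2}+2\norm{\mathfrak{K}_{n,q_n}^{\beta^{(1)},\dotsb,\beta^{(r)}}(f_1)-f_1}\Big).
\end{eqnarray*}
This is the only place where Lemma \ref{lem1} enters; from here the proof becomes an exercise in power-series manipulation.

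Next, multiply the inequality by $p_n u^{n-1}/p(u)$ and sum over $n\geq 1$. Since $\frac{1}{p(u)}\sum_{n=1}^{\infty}p_nu^{n-1}=1$ by (\ref{*}), the constant term contributes exactly $\epsilon$, and one arrives at
\begin{eqnarray*}
\frac{1}{p(u)}\sum_{n=1}^{\infty}\norm{\mathfrak{K}_{n,q_n}^{\beta^{(1)},\dotsb,\beta^{(r)}}(f)-f}p_n u^{n-1} & \leq & \epsilon+\frac{2M_f}{\delta^2}\cdot\frac{1}{p(u)}\sum_{n=1}^{\infty}\norm{\mathfrak{K}_{n,q_n}^{\beta^{(1)},\dotsb,\beta^{(r)}}(f_2)-f_2}p_n u^{n-1}\\
&& +\frac{4M_f}{\delta^2}\cdot\frac{1}{p(u)}\sum_{n=1}^{\infty}\norm{\mathfrak{K}_{n,q_n}^{\beta^{(1)},\dotsb,\beta^{(r)}}(f_1)-f_1}p_n u^{n-1}.
\end{eqnarray*}
Letting $u\to R^-$ and invoking hypothesis (\ref{17}) for $i=1,2$, both Abel-type sums on the right vanish, leaving $\limsup_{u\to R-}$ of the left side bounded by $\epsilon$. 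Since $\epsilon>0$ was arbitrary, (\ref{16}) follows.

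The proof is essentially routine once the framework is in place; the only conceptual point that requires minor care is the handling of the constant $\epsilon$ term, which relies on the normalisation $\mathfrak{K}_{n,q_n}^{\beta^{(1)},\dotsb,\beta^{(r)}}(1;x)=1$ together with the identity $\frac{1}{p(u)}\sum p_nu^{n-1}=1$. I do not anticipate genuine obstacles: the argument is, as expected, the natural translation of the standard Bohman--Korovkin proof into the power-series summability setting, and the analogue of the set decomposition $K_{\epsilon'}\subset K_{\epsilon'/2}\cup K'_{\epsilon'/2}$ used in the statistical case is here replaced by the simpler linearity of the Abel-like mean $\frac{1}{p(u)}\sum(\cdot)p_n u^{n-1}$.
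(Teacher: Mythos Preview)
Your proposal is correct and follows essentially the same route as the paper's own proof: both handle the trivial forward direction, then for the converse start from the uniform-continuity bound (\ref{15}), apply the operator, expand $(s-x)^2$ into the $f_1$- and $f_2$-terms, form the weighted power-series average using (\ref{*}), and conclude by letting $u\to R^-$ and invoking (\ref{17}) together with the arbitrariness of $\epsilon$. If anything, your write-up is slightly more careful (you retain the constant $2M_f/\delta^2$ and make explicit the role of Lemma~\ref{lem1}(\ref{part1}) and of the normalisation $\frac{1}{p(u)}\sum p_n u^{n-1}=1$), but the argument is the same.
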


\begin{proof}
First assume that (\ref{16}) is true. Then (\ref{17}) is obvious. Conversely, let the condition (\ref{17}) is true. Now, using the inequality given in (\ref{15}), we can write
\begin{eqnarray*}
\frac{1}{p(u)}\sum_{n= 1}^{\infty}\bigg|\mathfrak{K}_{n,q_n}^{\beta^{(1)},\dotsb,\beta^{(r)}}(f(s);x)-f(x)\bigg|p_{n}u^{n-1} &\leq &  \frac{1}{p(u)}\sum_{n= 1}^{\infty}\bigg|\epsilon + \frac{1}{\delta^2}~\mathfrak{K}_{n,q_n}^{\beta^{(1)},\dotsb,\beta^{(r)}}((s-x)^2;x)\bigg|p_{n}u^{n-1}\\
&\leq & \frac{1}{p(u)}\sum_{n= 1}^{\infty}\bigg{\lbrace}\epsilon + \frac{1}{\delta^2}\bigg(\big|\mathfrak{K}_{n,q_n}^{\beta^{(1)},\dotsb,\beta^{(r)}}(s^2;x)-x^2\big|\\
&+& 2\big|\mathfrak{K}_{n,q_n}^{\beta^{(1)},\dotsb,\beta^{(r)}}(s;x)-x\big|\bigg)\bigg{\rbrace}p_{n}u^{n-1},
\end{eqnarray*}
for all $ x \in I$. Considering sup norm and taking the required limit, in view of $(\ref{*})$, we obtain
\begin{eqnarray*}
\lim_{u \to R-} \frac{1}{p(u)}\sum_{n= 1}^{\infty}\norm{\mathfrak{K}_{n,q_n}^{\beta^{(1)},\dotsb,\beta^{(r)}}(f)-f}p_{n}u^{n-1} &\leq & \epsilon + \frac{1}{\delta^2}\bigg{\lbrace}\lim_{u \to R-} \frac{1}{p(u)}\sum_{n= 1}^{\infty}\norm{\mathfrak{K}_{n,q_n}^{\beta^{(1)},\dotsb,\beta^{(r)}}(s^2)-x^2}p_{n}u^{n-1} \\
&&+\lim_{u \to R-} \frac{1}{p(u)}\sum_{n= 1}^{\infty}\norm{\mathfrak{K}_{n,q_n}^{\beta^{(1)},\dotsb,\beta^{(r)}}(s)-x}p_{n}u^{n-1}\bigg{\rbrace}.
\end{eqnarray*}
Now, the assertion follows easily on using the hypothesis (\ref{17}) and arbitrariness of $\epsilon$.
\end{proof}

\begin{remark}
Let us assume that $\underset{n \to \infty}{\lim}\beta_{n}^{(r)} = 1$. In order to show that uniform convergence of $\big<\mathfrak{K}_{n,q_n}^{\beta^{(1)},\dotsb,\beta^{(r)}}(f)\big>$ to $f$ on $I$ by power series method, it is sufficient to establish the following;
\begin{eqnarray*}
\lim_{u \to R-} \frac{1}{p(u)}\sum_{n= 1}^{\infty}\norm{\mathfrak{K}_{n,q_n}^{\beta^{(1)},\dotsb,\beta^{(r)}}(f_{i})-f_i}p_{n}u^{n-1} &=& 0, \quad \text{for}~ i = 1,2.
\end{eqnarray*}
Using Lemma \ref{lem1},
\begin{eqnarray}\label{19}
\frac{1}{p(u)}\sum_{n = 1}^{\infty}\norm{\mathfrak{K}_{n,q_n}^{\beta^{(1)},\dotsb,\beta^{(r)}}(f_{1};x)-x} p_{n}u^{n-1} &\leq & \frac{1}{p(u)}\sum_{n = 1}^{\infty}\bigg((1-\beta^{(r)}_{n})+\frac{1}{[2]_{q_n}[n]_{q_n}}\bigg) p_{n}u^{n-1}\nonumber\\
&=& J_{1} + J_{2}, \quad \text{say.}
\end{eqnarray}
Now, let us estimate
\begin{eqnarray*}
J_1 &=& \frac{1}{p(u)}\sum_{n = 1}^{\infty}(1-\beta^{(r)}_{n}) p_{n}u^{n-1}.
\end{eqnarray*}
Let $\epsilon > 0$ be an arbitrary. Then from the hypothesis $\exists$ $n_{0}(\epsilon)$ such that $|1-\beta^{(r)}_{n}| \leq \frac{\epsilon}{4}$ for all $n > n_{0}(\epsilon)$. Then
\begin{eqnarray*}
J_1 &\leq & \frac{1}{p(u)}\sum_{n = 1}^{n_0}(1-\beta^{(r)}_{n}) p_{n}u^{n-1} +\frac{\epsilon}{4p(u)}\sum_{n = n_0 +1}^{\infty} p_{n}u^{n-1}\\
& < & \frac{1}{p(u)}\sum_{n = 1}^{n_0}(1-\beta^{(r)}_{n}) p_{n}u^{n-1} +\frac{\epsilon}{4p(u)}\sum_{n =1}^{\infty} p_{n}u^{n-1}.
\end{eqnarray*}
Since $\big<1-\beta_{n}^{(r)}\big>$ is a bounded sequence then $\exists$ $M_1 > 0$ such that $M_1 = \underset{1\leq n\leq n_0}{\max}(1-\beta_{n}^{(r)})$ and using (\ref{*}),
\begin{eqnarray*}
J_ 1 & < & \frac{M_1}{p(u)}\sum_{n = 1}^{n_0} p_{n}u^{n-1} +\frac{\epsilon}{4}.
\end{eqnarray*}
In view of regularity condition given by (\ref{18}) there exists $\delta_{j}(\epsilon) > 0$ such that $\frac{p_{j}u^{j-1}}{p(u)} < \frac{\epsilon}{4M_1n_0}$ for all $R-\delta_{j}(\epsilon) < u < R$, and $j = 1,2,...n_0(\epsilon)$. Let us consider $\delta(\epsilon) = \min\big(\delta_{1}(\epsilon),\delta_{2}(\epsilon), ...,\delta_{n_0}(\epsilon)\big)$ then for every $R-\delta(\epsilon) < u < R$ and for all $n = 1,2,...n_0$, we have
\begin{eqnarray*}
J_{1} & < & \frac{\epsilon}{4M_1n_0} M_1 n_0 + \frac{\epsilon}{4} = \frac{\epsilon}{2}.
\end{eqnarray*}
Now, we estimate
\begin{eqnarray*}
J_2 &=& \frac{1}{p(u)}\sum_{n = 1}^{\infty}\frac{1}{[2]_{q_n}[n]_{q_n}} p_{n}u^{n-1}.
\end{eqnarray*}
Since $\big<\frac{1}{[2]_{q_n}[n]_{q_n}}\big> \to 0$, as $n \to \infty$ there exists $n_{1}(\epsilon)\in \mathbb{N}$ such that $\frac{1}{[2]_{q_n}[n]_{q_n}} < \frac{\epsilon}{4}$, for all $n > n_{1}(\epsilon)$. Let us set $M_2 = \underset{1\leq n \leq n_1(\epsilon)}{\max}\frac{1}{[2]_{q_n}[n]_{q_n}}$. Then
\begin{eqnarray*}
J_2  & <& \frac{1}{p(u)}\sum_{n = 1}^{n_{1}}\frac{1}{[2]_{q_n}[n]_{q_n}} p_{n}u^{n-1} + \frac{\epsilon}{4}\\
& < & \frac{\epsilon}{4M_2n_1} M_2n_1 + \frac{\epsilon}{4} = \frac{\epsilon}{2},
\end{eqnarray*}
for some $\delta^{'}(\epsilon) > 0$ such that $u \in (R-\delta^{'}, R)$, in view of the regularity condition (\ref{18}). Finally on choosing $\delta_{0} = \min\big(\delta, \delta^{'}\big)$ and using the estimates of $I_1; I_2$ in (\ref{19}), we obtain
\begin{eqnarray*}
&&\frac{1}{p(u)}\sum_{n = 1}^{\infty}\norm{\mathfrak{K}_{n,q_n}^{\beta^{(1)},\dotsb,\beta^{(r)}}(f_{1})-f_1} p_{n}u^{n-1} ~ < ~  \epsilon , \quad \forall ~ u\in(R-\delta_0 , R).
\end{eqnarray*}
Next, using Lemma \ref{lem1}, we consider
\begin{eqnarray}\label{20}
\frac{1}{p(u)}\sum_{n = 1}^{\infty}\norm{\mathfrak{K}_{n,q_n}^{\beta^{(1)},\dotsb,\beta^{(r)}}(f_{2};x)-f_2} p_{n}u^{n-1} &\leq &\frac{1}{p(u)}\sum_{n = 1}^{\infty}\bigg(\frac{1}{[3]_{q_n}[n]^2_{q_n}}+\frac{\beta^{(r)}_{n}}{[n]_{q_n}}\bigg(1+\frac{2}{[2]_{q_n}}\bigg)+2(1-\beta^{(r)}_{n})\bigg)p_{n}u^{n-1}\nonumber\\
&=& K_1+K_2+K_3, \quad \text{say}.
\end{eqnarray}
Let the $\epsilon > 0$ be given. Now, first we estimate
\begin{eqnarray*}
K_1 &=&  \frac{1}{p(u)}\sum_{n = 1}^{\infty}\frac{1}{[3]_{q_n}[n]^2_{q_n}}p_{n}u^{n-1}\\
& < & \frac{1}{p(u)}\sum_{n = 1}^{n_2}\frac{1}{[3]_{q_n}[n]^2_{q_n}}p_{n}u^{n-1} + \frac{\epsilon}{6}, \quad \forall ~ n > n_2(\epsilon)\\
&< & \bigg(\max_{1\leq n \leq n_2}\frac{1}{[3]_{q_n}[n]^2_{q_n}}\bigg)\frac{1}{p(u)}\sum_{n = 1}^{n_2}p_{n}u^{n-1} + \frac{\epsilon}{6}\\
&= & \frac{M_3}{p(u)}\sum_{n = 1}^{n_2}p_{n}u^{n-1} + \frac{\epsilon}{6}\\
&< & M_3n_2 \frac{\epsilon}{6M_3n_2} + \frac{\epsilon}{6} = \frac{\epsilon}{3},
\end{eqnarray*}
for all $u\in (R-\theta, R)$ and some $\theta(\epsilon)>0$. Similarly, we can show that there exist some $\theta^{'} > 0$ and $\theta^{''} > 0$ such that
\begin{eqnarray*}
K_2 &= & \frac{1}{p(u)}\sum_{n = 1}^{\infty}\frac{\beta^{(r)}_{n}}{[n]_{q_n}}\bigg(1+\frac{2}{[2]_{q_n}}\bigg)p_{n}u^{n-1} ~<~ \frac{\epsilon}{3}, \quad \forall~ u\in (R-\theta^{'}, R),
\end{eqnarray*}
and
\begin{eqnarray*}
K_3 &=& \frac{2}{p(u)}\sum_{n = 1}^{\infty}(1-\beta^{(r)}_{n}) p_{n}u^{n-1} ~<~ \frac{\epsilon}{3}, \quad \forall~ u\in (R-\theta^{''}, R).
\end{eqnarray*}
Considering $\theta_{0}(\epsilon) = \min\{\theta, \theta^{'}, \theta^{''}\}$, and using the estimates $K_1 - K_3$ in (\ref{20}), we reach
\begin{eqnarray*}
\frac{1}{p(u)}\sum_{n= 1}^{\infty}\norm{\mathfrak{K}_{n,q_n}^{\beta^{(1)},\dotsb,\beta^{(r)}}(f_{2})-f_2}p_{n}u^{n-1}  ~ < ~  \epsilon , \quad \forall ~ u\in(R-\theta_0 , R).
\end{eqnarray*}
\end{remark}

\begin{remark}
Suppose that $\underset{n \to \infty}{\lim}\beta_{n}^{(r)} = 1$. For $f\in \mathfrak{C}(I)$, consider the following sequence of auxiliary operators defined by
\begin{eqnarray}\label{remeq1}
\mathfrak{P}_{n,q_n}^{\beta^{(1)},\dotsb,\beta^{(r)}}(f;x) = (1+x_{n})\mathfrak{K}_{n,q_n}^{\beta^{(1)},\dotsb,\beta^{(r)}}(f;x),
\end{eqnarray}
where $\big<x_n\big>=
\begin{array}{cc}
  \bigg\{\begin{array}{cc}
  1 ,& \text{if}~~ n=m^2, m\in \mathbb{N} ,\\
  0,& \text{otherwise}. \\
    \end{array}
\end{array}
$. Now if we take $p_n= 1
$ for all $n \in \mathbb{N}$, then we obtain $p(u)= \sum_{n=1}^{\infty}p_{n}u^{n-1}= \frac{1}{1-u}, |u| < 1 $ which implies that $R = 1$. Further, we note that 
\begin{eqnarray*}
\frac{1}{p(u)}\sum_{n=1}^{\infty}p_{n}u^{n-1}x_{n} = \frac{(1-u)}{u}\sum_{m=1}^{\infty}u^{m^2}.
\end{eqnarray*}
Since by Cauchy's root test, the series $\sum_{m=1}^{\infty}u^{m^2}$ is absolutely convergent in the interval $|u|<1$, it follows that
\begin{eqnarray}\label{remeq2}
\lim_{u \to 1-}\frac{1}{p(u)}\sum_{n=1}^{\infty}p_{n}u^{n-1}x_{n} = \lim_{u \to 1-} \frac{(1-u)}{u}\sum_{m=1}^{\infty}u^{m^2} = 0.
\end{eqnarray}
Hence, the sequence $\big<x_n\big>$ converges to zero in the sense of power series method. Using the definition of auxiliary operators and (\ref{remeq2}), we conclude that
\begin{eqnarray*}
\lim_{u \to 1-}\frac{1}{p(u)}\sum_{n=1}^{\infty}p_{n}u^{n-1}\norm{\mathfrak{P}_{n,q_n}^{\beta^{(1)},\dotsb,\beta^{(r)}}(f_0)-f_0} = 0.
\end{eqnarray*}   
Moreover, from equation (\ref{remeq1}) and Lemma \ref{lem1}, we have
\begin{eqnarray*}
\norm{\mathfrak{P}_{n,q_n}^{\beta^{(1)},\dotsb,\beta^{(r)}}(f_1)-f_1} &\leq & (1-\beta_{n}^{(r)}) + \frac{1}{[2]_{q_{n}}[n]_{q_n}} + x_{n}\bigg(\beta_{n}^{(r)} + \frac{1}{[2]_{q_{n}}[n]_{q_{n}}}\bigg).
\end{eqnarray*}
Since $\big<(1-\beta_{n}^{(r)}) + \frac{1}{[2]_{q_n}[n]_{q_n}}\big>$ converges to 0 as $n \to \infty $, it will also converge to 0 in the sense of power series method. Further $\beta_{n}^{(r)} + \frac{1}{[2]_{q_n}[n]_{q_n}} \leq  2$ for each $n \in \mathbb{N}$, hence in view of (\ref{remeq2}), we have 
\begin{eqnarray*}
\lim_{u \to 1-}\frac{1}{p(u)}\sum_{n=1}^{\infty}p_{n}u^{n-1}\norm{\mathfrak{P}_{n,q_n}^{\beta^{(1)},\dotsb,\beta^{(r)}}(f_1)-f_1} = 0.
\end{eqnarray*}
Again, using the definition (\ref{remeq1}) and Lemma \ref{lem1}, we obtain
\begin{eqnarray*}
\norm{\mathfrak{P}_{n,q_n}^{\beta^{(1)},\dotsb,\beta^{(r)}}(f_2)-f_2} &\leq & 
\bigg(\frac{1}{[3]_{q_n}[n]^2_{q_n}} +\frac{\beta_{n}^{(r)}}{[n]_{q_n}}\bigg(1+\frac{2}{[2]_{q_n}}\bigg) + 2(1-\beta_{n}^{(r)})\bigg) + x_n\bigg(1+\frac{1}{[n]_{q_n}}\bigg(1+\frac{2}{[2]_{q_n}}\bigg)+\frac{1}{[3]_{q_n}[n]^2_{q_n}}\bigg).
\end{eqnarray*}
We see that the sequence $\big<\frac{1}{[3]_{q_n}[n]^2_{q_n}} +\frac{\beta_{n}^{(r)}}{[n]_{q_n}}\bigg(1+\frac{2}{[2]_{q_n}}\bigg) + 2(1-\beta_{n}^{(r)})\big>$ converges to 0 in the sense of power series method and  $\bigg(1+\frac{1}{[n]_{q_n}}\bigg(1+\frac{2}{[2]_{q_n}}\bigg)+\frac{1}{[3]_{q_n}[n]^2_{q_n}}\bigg) \leq 5$ for each $n \in \mathbb{N}$. Thus using (\ref{remeq2}), we have
\begin{eqnarray*}
\lim_{u \to 1-}\frac{1}{p(u)}\sum_{n=1}^{\infty}p_{n}u^{n-1}\norm{\mathfrak{P}_{n,q_n}^{\beta^{(1)},\dotsb,\beta^{(r)}}(f_2)-f_2} = 0.
\end{eqnarray*}
This confirms that the auxiliary operator defined by (\ref{remeq1}) satisfies all the conditions given by (\ref{17}) of Theorem \ref{thm4}, therefore
\begin{eqnarray*}
\lim_{u \to 1-}\frac{1}{p(u)}\sum_{n=1}^{\infty}p_{n}u^{n-1}\norm{\mathfrak{P}_{n,q_n}^{\beta^{(1)},\dotsb,\beta^{(r)}}(f)-f} = 0.
\end{eqnarray*} 
However, $\big<x_n\big>$ is not convergent to 0 as $n \to \infty$ (in the usual sense). Thus, the Korovkin theorem for linear positive operators does not work for the auxiliary operator defined in (\ref{remeq1}). Hence, our Theorem \ref{thm4} is a non-trivial generalization of the classical Korovkin theorem.             
\end{remark}

\begin{theorem}
For $f\in \mathfrak{C}(I)$, if
\begin{eqnarray*}
\frac{1}{p(u)}\sum_{n= 1}^{\infty}\omega_{f}\big(\sqrt{\gamma_{n,q_n}(\beta_{n}^{(r)})}\big) p_{n}u^{n-1} = O(\Omega(u)), \quad as ~u \to R-
\end{eqnarray*}
then
\begin{eqnarray*}
\frac{1}{p(u)}\sum_{n= 1}^{\infty}\norm{\mathfrak{K}_{n,q_n}^{\beta^{(1)},\dotsb,\beta^{(r)}}(f)-f}p_{n}u^{n-1} = O(\Omega(u)),
\end{eqnarray*}
as $u \to R-$, where $\Omega(u)$ is some positive function on $(0, R)$.
\end{theorem}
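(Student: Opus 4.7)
The plan is to mimic the strategy used in Theorem 3 (the deferred weighted $A$-statistical quantitative estimate) but now within the power series summability framework. The starting point is the standard modulus of continuity inequality \eqref{14} applied pointwise, after acting by the operator $\mathfrak{K}_{n,q_n}^{\beta^{(1)},\dotsb,\beta^{(r)}}$ on it. Using the positivity and linearity of the operator together with part (i) of Lemma \ref{lem1}, I obtain
\begin{eqnarray*}
\bigl|\mathfrak{K}_{n,q_n}^{\beta^{(1)},\dotsb,\beta^{(r)}}(f;x)-f(x)\bigr| \leq \left(1+\frac{1}{\delta^2}\,\mathfrak{K}_{n,q_n}^{\beta^{(1)},\dotsb,\beta^{(r)}}\bigl((s-x)^2;x\bigr)\right)\omega_{f}(\delta),
\end{eqnarray*}
valid for every $x\in I$ and every $\delta>0$.

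Next, I would invoke Lemma \ref{lem2} to replace the central second moment by the uniform bound $\gamma_{n,q_n}(\beta_{n}^{(r)})$ and then take the sup norm over $x\in I$. Choosing the natural scale $\delta=\sqrt{\gamma_{n,q_n}(\beta_{n}^{(r)})}$ collapses the bracket to the constant $2$, giving
\begin{eqnarray*}
\norm{\mathfrak{K}_{n,q_n}^{\beta^{(1)},\dotsb,\beta^{(r)}}(f)-f} \leq 2\,\omega_{f}\!\left(\sqrt{\gamma_{n,q_n}(\beta_{n}^{(r)})}\right).
\end{eqnarray*}
This is the single quantitative ingredient needed; from here everything is a transfer through the power series transform.

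I would then multiply both sides by $p_{n}u^{n-1}/p(u)$, sum over $n\geq 1$ (legitimate since all terms are non-negative), and pass to the limit $u\to R-$. This yields
\begin{eqnarray*}
\frac{1}{p(u)}\sum_{n=1}^{\infty}\norm{\mathfrak{K}_{n,q_n}^{\beta^{(1)},\dotsb,\beta^{(r)}}(f)-f}\,p_{n}u^{n-1} \leq \frac{2}{p(u)}\sum_{n=1}^{\infty}\omega_{f}\!\left(\sqrt{\gamma_{n,q_n}(\beta_{n}^{(r)})}\right)p_{n}u^{n-1},
\end{eqnarray*}
and the conclusion follows immediately from the hypothesis that the right-hand side is $O(\Omega(u))$ as $u\to R-$.

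The proof is essentially routine once Lemma \ref{lem1} and Lemma \ref{lem2} are in hand; there is no genuine obstacle. The one subtlety worth flagging is that, because the hypothesis supplies a big-$O$ rather than little-$o$ control and all summands are non-negative, the termwise inequality for the norms transfers to the weighted power series averages without needing any interchange-of-limit argument or appeal to regularity \eqref{18}. The choice of $\delta$ as the operator-dependent quantity $\sqrt{\gamma_{n,q_n}(\beta_{n}^{(r)})}$ is what makes the modulus factor match the form appearing in the hypothesis, so that the estimate can be absorbed cleanly into $O(\Omega(u))$.
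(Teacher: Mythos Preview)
Your proof is correct and follows essentially the same route as the paper: apply the modulus-of-continuity inequality \eqref{14} together with Lemma~\ref{lem1}(i) and Lemma~\ref{lem2}, choose $\delta=\sqrt{\gamma_{n,q_n}(\beta_{n}^{(r)})}$ to obtain $\norm{\mathfrak{K}_{n,q_n}^{\beta^{(1)},\dotsb,\beta^{(r)}}(f)-f}\le 2\,\omega_{f}\bigl(\sqrt{\gamma_{n,q_n}(\beta_{n}^{(r)})}\bigr)$, and then pass to the power series averages. The only cosmetic difference is that the paper inserts the power series weights before invoking Lemma~\ref{lem2} and choosing $\delta$, whereas you bound each term first and then sum; the content is identical.
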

\begin{proof}
For $f \in \mathfrak{C}(I)$ and $\delta > 0$, using Lemma \ref{lem2}
\begin{eqnarray*}
\frac{1}{p(u)}\sum_{n = 1}^{\infty}\norm{\mathfrak{K}_{n,q_n}^{\beta^{(1)},\dotsb,\beta^{(r)}}(f)-f}p_{n}u^{n-1} &\leq & \frac{1}{p(u)}\sum_{n = 1}^{\infty}\big{\lbrace}1+\frac{1}{\delta^2}\norm{\mathfrak{K}_{n,q_n}^{\beta^{(1)},\dotsb,\beta^{(r)}}(s-x)^2}\big{\rbrace}\omega_{f}(\delta)p_{n}u^{n-1}\\
&\leq & \frac{1}{p(u)}\sum_{n = 1}^{\infty}\big{\lbrace}1+\frac{1}{\delta^2}\gamma_{n,q_n}(\beta_{n}^{(r)})\big{\rbrace}\omega_{f}(\delta)p_{n}u^{n-1}
\end{eqnarray*}
for every $u \in (0, R)$. Taking $\delta = \sqrt{\gamma_{n,q_n}(\beta_{n}^{(r)})}$, we obtain
\begin{eqnarray*}
\frac{1}{p(u)}\sum_{n = 1}^{\infty}\norm{\mathfrak{K}_{n,q_n}^{\beta^{(1)},\dotsb,\beta^{(r)}}(f)-f}p_{n}u^{n-1} &\leq & \frac{2}{p(u)}\sum_{n = 1}^{\infty}\omega_{f}\big(\sqrt{\gamma_{n,q_n}(\beta_{n}^{(r)})}\big) p_{n}u^{n-1}.
\end{eqnarray*}
Hence in view of our hypothesis, the required result follows.
\end{proof}

\begin{center}
\textbf{Acknowledgments}
\end{center}
The authors are grateful to Dr. E. Erku\c{s}-Duman and Dr. O. Duman for their constructive and invaluable suggestions. We believe these suggestions have enrich the presentation and quality of the paper. The second author  also gratefully acknowledges the financial support given to him by the Ministry of Education, Govt. of India to carry out the above work.
\begin{center}
\textbf{Declarations}
\end{center}
\textbf{Conflict of interest:-} Not applicable;\\
\textbf{Data availability:-} We assert that no data sets were generated or analyzed during the preparation of the manuscript;\\
\textbf{Code availability:-} Not applicable;\\
\textbf{Authors' contributions:-} All the authors have equally contributed to the conceptualization, framing and writing of the manuscript.

\end{document}